\documentclass{amsart}
  \usepackage{amscd,amssymb,epsfig}
   \usepackage{epic,eepic}

\parskip 0.2cm
\input{amssym.def}
\input{amssym}
\usepackage{amscd}

                     \numberwithin{equation}{section}

                     \newtheorem{propo}{Proposition}[section]
                     \newtheorem{corol}[propo]{Corollary}
                     \newtheorem{theor}[propo]{Theorem}
                     \newtheorem{lemma}[propo]{Lemma}
                     \theoremstyle{definition}

                     \theoremstyle{remark}
                     \newtheorem{remar}[propo]{Remark}

		     \newcommand{\CC}{\mathbb{C}}
		     
                     \newcommand{\ZZ}{\mathbb{Z}}

                     \newcommand{\Hom}{\operatorname{Hom}}

                     \newcommand{\id}{\operatorname{id}}

               \begin{document}
      \title[Dijkgraaf-Witten invariants of surfaces]{Dijkgraaf-Witten invariants of surfaces and projective representations of groups}
                     \author[Vladimir Turaev]{Vladimir Turaev}
                     \address{%
              IRMA, Universit\'e Louis  Pasteur - C.N.R.S., \newline
\indent  7 rue Ren\'e Descartes \newline
                     \indent F-67084 Strasbourg \newline
                     \indent France \newline
                     \indent  and \newline
                       \indent  Department of Mathematics \newline
                         \indent  Indiana University \newline
                           \indent   Rawles Hall,  831 East 3rd St  \newline
    \indent  Bloomington, IN 47405 \newline
                             \indent  USA \newline  }
                     \begin{abstract} 	We compute the Dijkgraaf-Witten invariants of surfaces in terms of projective representations of groups.  As an application we prove that the 
    complex  Dijkgraaf-Witten invariants of surfaces of positive genus are positive integers.

 {\bf AMS Classification} 57R56,  81T45,  20C25     
 
 {\bf Keywords:}     surfaces, Dijkgraaf-Witten invariants, projective representations 

                     \end{abstract}

                     \maketitle

                  \section{Introduction}\label{se1}
		  
	Dijkgraaf and  Witten \cite{dw}  derived  homotopy invariants of  3-manifolds  from  3-dimensional cohomology classes of finite groups. Their construction  provides   examples  of      path integrals reduced  to   finite sums. It extends to  arbitrary dimensions as follows. Fix a field $F$ and let  $F^*=F-\{0\}$ be  the multiplicative group of non-zero elements of $F$. Fix a  finite group $G$ whose order $\#G$ is invertible in $F$.   Pick an Eilenberg-MacLane  CW-space $X$ of type $K(G,1)$ with base point $x\in X$. Consider a closed connected oriented   topological manifold $M$ of dimension $n\geq 1$ with base point $m_0\in M$ and set $\pi =\pi_1(M,m_0)$. Observe that for any group homomorphism $\gamma:\pi \to G$, there is a  mapping $f_\gamma:(M,m_0)\to (X,x)$  (unique up to homotopy)
such that the induced homomorphism $(f_\gamma)_{\#}: \pi \to \pi_1(X,x)=G$ is equal to $\gamma$.  The {\it Dijkgraaf-Witten invariant}  $Z_\alpha (M)\in F$   determined by a  cohomology class $\alpha\in H^n(G; F^*)=H^n(X;F^*)$ is defined by 
\begin{equation}\label{e1} Z_\alpha (M)=(\#G)^{-1} \sum_{\gamma\in \Hom (\pi , G)} \langle (f_\gamma)^* (\alpha), [M]\rangle  \, .\end{equation}
Here $	 \Hom (\pi , G)$ is the (finite) set of all group homomorphisms
$  \pi \to  G $ and  $$\langle (f_\gamma)^* (\alpha), [M]\rangle\in F^*  $$ is the value 
of   $  (f_\gamma)^* (\alpha)\in H^n(M; F^*)$ on the fundamental class $ [M] \in H_n(M; \ZZ)$.
The     addition on the right-hand side of \eqref{e1} is the  addition in $F$.  One may say that 
  $Z_\alpha (M)$ counts the homomorphisms $\gamma:\pi  \to G$ with weights $(\#G)^{-1} \, \langle (f_\gamma)^* (\alpha), [M]\rangle$. In particular, for $\gamma=1$, we have $\langle (f_\gamma)^* (\alpha), [M]\rangle=1_F$, where $1_F\in F$ is the unit of $F$.  Thus,  $\gamma=1$ contributes the summand $(\#G)^{-1}\cdot 1_F$ to $Z_\alpha (M)$.

  It is clear from the definitions that $Z_\alpha (M)$   depends neither on the choice of the base point $m_0\in M$  nor on the choice of the Eilenberg-MacLane space $X$. Moreover,  $Z_\alpha (M)$ depends only on $\alpha$ and the  homotopy  type of $M$.
  For example, if $M$ is simply connected, that is if   $\pi =\{1\}$, then 
   $Z_{\alpha} (M)= (\#G)^{-1}\cdot 1_F$ for all $\alpha$.  If $G=\{1\}$, then $Z_{\alpha} (M)=1_F$ for all $M$.
   	
	In this paper we  study  the case $ n=2$.  	We begin with elementary algebraic preliminaries. If $p\geq 0$ is the characteristic of the field $F$, then the formula 
	$m\mapsto m\cdot 1_F$ for $m\in \ZZ/p\ZZ$, defines an isomorphism of 
	$\ZZ/p\ZZ$ onto the ring $(\ZZ/p\ZZ)\cdot 1_F\subset F$ additively generated by $1_F$.
	We   identify   $\ZZ/p\ZZ$ with $(\ZZ/p\ZZ)\cdot 1_F$ via this isomorphism. 
	In particular, if $p=0$, then $\ZZ=\ZZ\cdot 1_F\subset F$.
	
		  \begin{theor}\label{cor1}  Let  $F$ be    a  field of characteristic $p\geq 0$ and $\alpha\in H^2(G; F^*)$. If $p=0$, then  $Z_{\alpha} (M)\in F$ is a positive integer for all   closed connected  oriented surfaces $M\neq S^2$.  If $p>0$, then  
		  $Z_{\alpha} (M)\in \ZZ/p\ZZ \subset F$   for all   closed connected  oriented surfaces $M $.\end{theor}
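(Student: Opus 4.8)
The plan is to compute $Z_\alpha(M)$ as a sum indexed by the irreducible $\alpha$-projective representations of $G$ and then to invoke Schur's theorem that their degrees divide $\#G$. \textbf{Step 1: reduction to a Frobenius-algebra trace.} Pick a normalized $2$-cocycle $c\colon G\times G\to F^*$ representing $\alpha$ and let $A=F^c[G]$ be the twisted group algebra, with basis $\{u_g\}_{g\in G}$ and product $u_gu_h=c(g,h)u_{gh}$. Since $\#G$ is invertible in $F$, a twisted Maschke argument shows $A$ is semisimple; extending scalars to an algebraic closure $\bar F$ — which changes neither the statement (the inclusion $F\hookrightarrow\bar F$ is injective, carries $\alpha$ to a class $\bar\alpha$ with $Z_{\bar\alpha}(M)$ the image of $Z_\alpha(M)$, and preserves the prime subfield) nor the question at hand — we get $\bar F^c[G]\cong\prod_{i=1}^k M_{n_i}(\bar F)$. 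I would then triangulate a genus-$g$ surface $M$ from a $4g$-gon, write $\langle f_\gamma^*(\alpha),[M]\rangle$ as the product of the values of $c$ on the resulting $2$-simplices, and recognize the resulting state sum over $\gamma\in\Hom(\pi,G)$ as the genus-$g$ handle trace of the commutative Frobenius algebra $Z(\bar F^c[G])$ equipped with the form induced by the canonical trace of $\bar F^c[G]$. This should give, for every closed connected oriented surface $M$,
\[
Z_\alpha(M)=\sum_{i=1}^k\Bigl(\frac{\#G}{n_i}\Bigr)^{-\chi(M)},
\]
where $\chi(M)=2-2g$ and $n_1,\dots,n_k$ are the degrees of the irreducible $\alpha$-projective $\bar F$-representations of $G$. (Consistency check: for $M=S^2$ the right side is $\sum_i(n_i/\#G)^2=(\#G)^{-2}\sum_i n_i^2=(\#G)^{-1}$, matching the value noted in the introduction; for the torus it is $k$, the number of $\alpha$-regular conjugacy classes.)

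\textbf{Step 2: divisibility of the degrees.} By Schur's theorem the degree of any irreducible $\alpha$-projective representation over an algebraically closed field divides $\#G$: concretely, lifting $c$ to a central extension $1\to\mu\to\tilde G\to G\to 1$ with $\#\mu$ dividing $\#G$ realizes these representations inside the representations of $\tilde G$ on which $\mu$ acts by a fixed character, and since $\#\tilde G=\#\mu\cdot\#G$ is still invertible in $F$ the classical argument applies (in characteristic $p\nmid\#G$ the degrees moreover coincide with their characteristic-zero counterparts). Thus $n_i\mid\#G$ for all $i$. I expect this to be the one genuinely non-formal input, isolated as a lemma.

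\textbf{Step 3: conclusion.} If $M\neq S^2$ then $g\geq 1$, so $-\chi(M)=2g-2\geq 0$; writing $d_i=\#G/n_i\in\ZZ$ (a positive integer), the number $N:=\sum_i d_i^{\,2g-2}$ is a positive integer, and by Step 1 — each $n_i$ is prime to $p$, hence invertible in $F$, so ``$\#G/n_i$ in $F$'' is the image of $d_i$ — we get $Z_\alpha(M)=N\cdot 1_F$. If $p=0$ this is the positive integer $N\in\ZZ\subset F$; if $p>0$ it is $N\bmod p\in\ZZ/p\ZZ\subset F$. Finally, for $M=S^2$ with $p>0$, the introduction gives $Z_\alpha(S^2)=(\#G)^{-1}$, and since $\#G\cdot 1_F$ is a nonzero element of the field $\ZZ/p\ZZ$ its inverse lies in $\ZZ/p\ZZ$, so $Z_\alpha(S^2)\in\ZZ/p\ZZ\subset F$ as well.

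The hard part is Step 1: extracting $\langle f_\gamma^*(\alpha),[M]\rangle$ explicitly from the cocycle and massaging the double sum (over homomorphisms $\gamma$ and over simplices of $M$) into the closed representation-theoretic form — equivalently, identifying the $2$-dimensional TQFT underlying $Z_\alpha$ with the one attached to the twisted group algebra $F^c[G]$ viewed as a Frobenius algebra. Once that formula is in hand, Steps 2 and 3 are short and essentially classical.
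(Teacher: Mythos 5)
Your proposal follows essentially the same route as the paper: identify $Z_\alpha$ with the state-sum/Frobenius-algebra invariant of the twisted group algebra $F^c[G]$, decompose it over $\bar F$ into matrix algebras to obtain the Verlinde-type formula $Z_\alpha(M)=\sum_i(\#G/n_i)^{-\chi(M)}$, invoke Schur's divisibility $n_i\mid\#G$, and reduce a general field to its algebraic closure. The one step you leave as a sketch --- matching $\langle f_\gamma^*(\alpha),[M]\rangle$ with the product of cocycle values over the $2$-simplices and hence with the lattice state sum of $F^c[G]$ --- is precisely the content of the paper's Theorem \ref{bdbdbdad}, carried out there for an arbitrary triangulation via admissible edge labelings and the bar resolution.
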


	 The case of the sphere $M=S^2$ stays somewhat apart. Since $S^2$ is simply connected, $Z_{\alpha} (S^2)= (\#G)^{-1}\cdot 1_F$ for all $\alpha$.  If $p>0$, then by the assumptions on $G$, the number $\# G$ is invertible in $\ZZ/p\ZZ$ and  
	 $Z_{\alpha} (S^2)\in \ZZ/p\ZZ \subset F$. If   $p=0$, then 
	 $Z_{\alpha} (S^2)$ is not an integer except for $G=\{1\}$.
	
	Applying Theorem~\ref{cor1} to $F=\CC$, we obtain that the complex number $Z_{\alpha} (M) $ is a positive integer for all    surfaces $M\neq S^2$ and all $\alpha\in H^2(G; \CC^*)$.  
	
	A curious feature of Theorem~\ref{cor1} is that its statement uses  only   classical  notions of algebraic topology   while its proof, given below,  is based on ideas and techniques from quantum topology. I do not know how to prove this theorem  using only the standard tools of    algebraic topology.

	Results similar to Theorem~\ref{cor1} are familiar in the study of 3-dimensional  topological quantum field theories (TQFT's), where     partition functions on surfaces compute   dimensions of certain vector spaces associated to the  surfaces. 
It would be interesting to give an interpretation of  
	$Z_{\alpha} (M)$ as the dimension of a vector space naturally associated with $M$. 

The  proof of Theorem~\ref{cor1} is based on a Verlinde-type formula for $Z_{\alpha} (M)$ stated    in terms of projective representations of $G$.  Let, as above, $F$ be a field (not necessarily algebraically closed). 
	Fix a  2-cocycle $c:G\times G\to F^*$ so that for all $g_1, g_2, g_3 \in G$, 
\begin{equation}\label{d1}c(g_1,g_2) \,c(g_1 g_2, g_3)= c(g_1, g_2  g_3) \, c(g_2, g_3)\, .\end{equation} 
We will always assume that $c$ is {\it normalized} in the sense that  $c(g,1)=c(1,g)=1$ for all $g\in G$. (Any cohomology class  in $H^2(G;F^*)$ can be represented by a normalized cocycle.)
A mapping   $\rho:G\to  GL (W)$, where $W$ is a finite dimensional vector space over $F$, is called a {\it $c$-representation of}  $G$
  if $\rho(1)=\id_W$ and $\rho(g_1  g_2) =c(g_1, g_2)\, \rho (g_1) \,\rho (g_2)$  for any $g_1, g_2\in G$.   
The dimension of $W$ over $F$ is denoted $\dim (\rho)$.  Two $c$-representations $\rho :G\to  GL (W )$ and $\rho':G\to  GL (W')$ are  {\it equivalent} if there is   an isomorphism of vector spaces $j:W \to W' $ such that $\rho'(g)=  j \, \rho (g) \, j^{-1}$ for all $g\in G$.
It is clear that    $\dim (\rho)$   depends only on the equivalence class of $\rho$.

A  $c$-representation $\rho:G\to  GL (W)$ is {\it  irreducible}  if $0$ and $W$ are the only vector subspaces of $W$ invariant under $\rho(G)$. It is obvious that a  $c$-representation equivalent to an irreducible one is itself irreducible. 
The set of   equivalence classes of irreducible $c$-representations of $ G $   is denoted $ \widehat G_c$. We   explain below that  $ \widehat G_c$   is a   finite non-empty set.  

The   2-cocycle $c$   represents a cohomology class  $[c]\in   H^2(G; F^*)$.
The theory of $c$-representations of $ G $ depends only on $[c]$.
Indeed, any two  normalized 2-cocycles $c , c':G\times G\to F^*$  representing the same cohomology class satisfy   $$c (g_1, g_2)=c'(g_1, g_2) \,b(g_1) \,b (g_2) \, (b (g_1  g_2) )^{-1}$$ for all  $g_1, g_2\in G$ and a  mapping $b:G \to F^*$ such that $b(1)=1$.   A  $c$-representation $\rho$ of $G$ gives rise to a  $c'$-representation $\rho'$ of $G$  by $\rho'(g) =b(g) \rho (g)$ for   $g\in G$. This establishes a bijection  between $c$-representations and $c'$-representations  and  induces a bijection $\widehat G_c \approx  \widehat G_{c'}$.

 \begin{theor}\label{th1} Let    $F$ be an algebraically closed field such that    $\#G$ is invertible in $F$. For any  normalized 2-cocycle $c:G\times G\to F^*$ and any closed connected oriented surface $M$, 
\begin{equation}\label{e2}Z_{[c]} (M)= (\#G)^{-\chi(M)} \, \sum_{\rho\in   \widehat G_c} 
	   (\dim \rho)^{\chi(M)} \cdot 1_F \,,\end{equation}
	where $\chi(M)$ is the Euler characteristic of $M$.\end{theor}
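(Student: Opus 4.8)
The plan is to realize $Z_{[c]}(M)$ as a two-dimensional lattice state sum attached to the twisted group algebra of $G$, and then to evaluate that state sum by diagonalizing it via the Wedderburn decomposition. Write $\chi=\chi(M)$ and let $g$ be the genus of $M$, so $\chi=2-2g$. Let $A$ be the associative $F$-algebra with $F$-basis $(u_g)_{g\in G}$ and multiplication $u_gu_h=c(g,h)^{-1}u_{gh}$; this is associative by \eqref{d1}, has unit $u_1$ because $c$ is normalized, and has $\dim_F A=\#G$. Since $\#G$ is invertible in $F$ and $F$ is algebraically closed, a twisted version of Maschke's theorem shows $A$ is semisimple with Wedderburn decomposition $A\cong\prod_{\rho\in\widehat G_c}\operatorname{End}_F(W_\rho)$, the simple $A$-modules being exactly the irreducible $c$-representations of $G$ (for an $A$-module $W$ the operators $\rho(g)$ by which $u_g$ acts satisfy $\rho(g_1g_2)=c(g_1,g_2)\rho(g_1)\rho(g_2)$); in particular $\widehat G_c$ is finite and non-empty and $\sum_{\rho}(\dim\rho)^2=\#G$. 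Equip $A$ with the linear form $\epsilon\colon A\to F$ given by $\epsilon(u_g)=(\#G)^{-1}\delta_{g,1}$. The cocycle identity \eqref{d1} for the normalized $c$ gives $c(g,g^{-1})=c(g^{-1},g)$, so $(a,b)\mapsto\epsilon(ab)$ is a nondegenerate symmetric bilinear form and $(A,\epsilon)$ is a symmetric Frobenius algebra. First I would record how $\epsilon$ sits inside the Wedderburn decomposition: the trace of the operator $a\mapsto u_ga$ on $A$ equals $\#G\cdot\delta_{g,1}$, and as a left $A$-module $A\cong\bigoplus_{\rho}W_\rho^{\oplus\dim\rho}$, so the restriction of $\epsilon$ to the block $\operatorname{End}_F(W_\rho)$ equals $\frac{\dim\rho}{(\#G)^2}\operatorname{Tr}_{W_\rho}$; consequently $\epsilon(z_\rho)=\left(\frac{\dim\rho}{\#G}\right)^2$, where $z_\rho$ is the central idempotent of that block.

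Next I would identify $Z_{[c]}(M)$ with the state sum attached to $(A,\epsilon)$. Choose a triangulation (or any polyhedral decomposition) $K$ of $M$, with vertex set $V$. After fixing a base vertex and a spanning tree, a flat $G$-coloring of the edges of $K$ --- one with trivial holonomy around every $2$-cell --- has a well-defined holonomy $\gamma\in\Hom(\pi_1(M),G)$, and the resulting map from colorings to homomorphisms is surjective with all fibres of cardinality $(\#G)^{|V|-1}$; moreover $\langle f_\gamma^*([c]),[M]\rangle$ is the product over the $2$-cells $\sigma$ of $K$ of the corresponding values $c(\,\cdot\,,\,\cdot\,)^{\pm1}$, the exponent being $\pm1$ according to orientations. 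Feeding these two facts into \eqref{e1} rewrites
\[
Z_{[c]}(M)=(\#G)^{-|V|}\sum_{x}\ \prod_{\sigma}\ c_{\sigma}(x),
\]
where $x$ runs over flat $G$-colorings of $K$, $\sigma$ over the $2$-cells of $K$, and $c_\sigma(x)\in F^*$ is the value of $c$ or of $c^{-1}$ on the pair of group elements read off from $x$; this is precisely the Fukuma--Hosono--Kawai state sum built from the structure constants $u_gu_h=c(g,h)^{-1}u_{gh}$ of $A$ and the form $\epsilon$. I expect this step to be the main obstacle: it is where state-sum (``quantum'') technology genuinely enters, and the delicate points are (a) matching the single normalization factor $(\#G)^{-1}$ of \eqref{e1} with the per-vertex $(\#G)^{-1}$ of the state sum, via the fibre count above, and (b) the cohomological identity realizing $\langle f_\gamma^*([c]),[M]\rangle$ as the displayed product --- obtained by modelling the classifying map $f_\gamma$ simplicially and evaluating the pullback of the bar $2$-cocycle $c$ on the fundamental cycle of $M$. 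Independence of the choice of $K$ then follows either from homotopy invariance of the left-hand side of \eqref{e1} or, directly, by verifying Pachner-move invariance of the right-hand side using that $(A,\epsilon)$ is a separable symmetric Frobenius algebra.

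Finally I would evaluate the state sum. It is standard --- and can be seen by assembling $M$ from one $0$-handle, $2g$ $1$-handles and one $2$-handle --- that the state sum of a symmetric Frobenius algebra depends only on the center of $A$ with the restriction of $\epsilon$: the handle operator acts on the line $Fz_\rho$ by multiplication by $\epsilon(z_\rho)^{-1}$, and the final $2$-handle caps off by $\epsilon$, so the state sum of a closed connected oriented surface of genus $g$ equals $\sum_{\rho\in\widehat G_c}\epsilon(z_\rho)^{1-g}$. Substituting $\epsilon(z_\rho)=(\dim\rho/\#G)^2$ and $1-g=\chi/2$ yields
\[
Z_{[c]}(M)=\sum_{\rho\in\widehat G_c}\left(\frac{\dim\rho}{\#G}\right)^{\chi}=(\#G)^{-\chi}\sum_{\rho\in\widehat G_c}(\dim\rho)^{\chi},
\]
understood as an element of $F$ (here $\#G$, and likewise each $\dim\rho$, are invertible in $F$), which is \eqref{e2}. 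As a consistency check, for $M=S^2$ one has $\chi=2$ and, using $\sum_{\rho}(\dim\rho)^2=\#G$, the right-hand side collapses to $(\#G)^{-2}\cdot\#G\cdot1_F=(\#G)^{-1}\cdot1_F$, agreeing with the value of $Z_{[c]}(S^2)$ obtained directly from \eqref{e1}.
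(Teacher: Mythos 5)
Your overall strategy is the one the paper follows: realize $Z_{[c]}(M)$ as the Fukuma--Hosono--Kawai state sum of the twisted group algebra and then diagonalize via the Wedderburn decomposition. The fibre count $(\#G)^{|V|-1}$ over each holonomy homomorphism, the expression of $\langle f_\gamma^*([c]),[M]\rangle$ as a product of values $c(\cdot,\cdot)^{\pm1}$ over the $2$-simplices, and the identification of simple modules with irreducible $c$-representations all match the paper (your convention $u_gu_h=c(g,h)^{-1}u_{gh}$ versus the paper's $g_1\cdot g_2=c(g_1,g_2)\,g_1g_2$ merely interchanges $c$ and $c^{-1}$ and is harmless, since \eqref{e2} is insensitive to it). The genuine problem is the normalization of the Frobenius form, which sits exactly at the step you yourself flag as the main obstacle. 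Rescaling the form by $\lambda$ multiplies each triangle weight by $\lambda$ and each edge weight by $\lambda^{-1}$, hence multiplies the state sum by $\lambda^{k_2-k_1}=\lambda^{\chi(M)-|V|}$, a triangulation-dependent factor. Consequently, for your choice $\epsilon(u_g)=(\#G)^{-1}\delta_{g,1}$ --- which is $(\#G)^{-2}$ times the regular trace $T$ --- the state sum is not triangulation-independent, is not equal to $(\#G)^{-|V|}\sum_x\prod_\sigma c_\sigma(x)$, and is not equal to $\sum_\rho\epsilon(z_\rho)^{1-g}$. No rescaling of the form alone can absorb a per-vertex factor $(\#G)^{-1}$, because that factor enters through $|V|$ while a rescaling enters only through $\chi(M)-|V|$. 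Your final formula comes out correct only because the two misidentifications are off by the same factor $(\#G)^{2|V|-\chi(M)}$, which cancels; as written, the middle of the argument does not hold.

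The repair is what the paper does. Use the trace form $T(u_g)=\#G\,\delta_{g,1}$, for which $\sum_i v^1_iv^2_i=1$ and the state sum $I_{A^{(c)}}(M)$ is genuinely triangulation-independent; then the bookkeeping $k_2-k_1=\chi(M)-|V|$, combined with your fibre count and the identity \eqref{e491}, yields $I_{A^{(c)}}(M)=(\#G)^{\chi(M)}Z_{[c]}(M)$ (Theorem~\ref{bdbdbdad}), so the factor $(\#G)^{-\chi(M)}$ of \eqref{e2} appears in front of the state sum rather than inside the algebra. With $T$ one has $T(z_\rho)=(\dim\rho)^2$, and your handle-decomposition evaluation then gives $I_{A^{(c)}}(M_k)=\sum_\rho T(z_\rho)^{1-k}=\sum_\rho(\dim\rho)^{\chi(M_k)}$; this is a legitimate alternative to the paper's route, which instead splits $A^{(c)}$ into matrix blocks and invokes the explicit computation $I_{{\rm Mat}_d(F)}(M)=d^{\chi(M)}\cdot 1_F$ together with additivity of the state sum under direct products. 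Your consistency check at $M=S^2$ and the identity $\sum_\rho(\dim\rho)^2=\#G$ are correct and agree with Lemma~\ref{le1} and formula \eqref{rop+-+}.
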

	
	It is known that the integer $\dim \rho$ divides $\#G$ in $\ZZ$ for any  $\rho\in   \widehat G_c$, see, for instance,  \cite[p.\ 296]{ka}. Therefore $\dim \rho$ is invertible in $F$ so that the expression on the right hand side of \eqref{e2} is well defined for all values of $\chi(M)$.

For $M=S^1 \times S^1$,  Formula \eqref{e2} gives $Z_{[c]} (S^1 \times S^1)=\# \widehat G_c \, ({\text {mod}}\,  p)$, where $p\geq 0$ is the characteristic of $F$. For $M=S^2$, Formula \eqref{e2} can be rewritten as  $\sum_{\rho\in   \widehat G_c} 
	   (\dim \rho)^{2}= \#G\, ({\text {mod}}\,  p)$.
	If $p=0$, this gives 
\begin{equation}\label{new} \sum_{\rho\in   \widehat G_c} 
	   (\dim \rho)^{2}= \#G\,.
	   \end{equation}
	   We show below that   \eqref{new} holds also when $p>0$.

Denote $M_k$   a closed connected oriented surface of genus $k\geq 0$. If $k\geq 1$, then $-\chi(M)=2k-2\geq 0$ and 
$$Z_{[c]} (M_k)=  \sum_{\rho\in   \widehat G_c} 
	   \bigl (\frac{\# G} {\dim \rho}\bigr)^{2k-2} \cdot 1_F $$
	   is a non-empty sum of positive integers times $1_F$. This implies Theorem \ref{cor1} for algebraically closed $F$.  The general case of Theorem \ref{cor1} follows by taking the   algebraic closure of the ground field (this   does not change the Dijkgraaf-Witten invariant).

For  the trivial cocycle $c=1$, we have $[c]=0$ and   $\widehat G=\widehat G_c$ is the set 	of all irreducible (finite-dimensional)  linear representations of $G$ over $F$ considered up to linear equivalence. Clearly,  $\langle (f_\gamma)^* ([c]), [M]\rangle=1\in F^*$ for any closed connected oriented surface $M$ and any  homomorphism  $\gamma $ from  $\pi=\pi_1(M)$   to $G$. Thus,  $Z_0(M)=
	(\#G)^{-1} \# \Hom (\pi , G)$.   Formula  \eqref{e2} gives  	\begin{equation}\label{e3}\# \Hom (\pi , G)=   \#G \, \, \sum_{\rho\in  \widehat G} 
	   \bigl (\frac{\#G}{ \dim \rho}\bigr )^{-\chi(M)} \, ({\text {mod}} \, p) \, ,\end{equation}
	   where $p\geq 0$ is the characteristic of $F$.
For $M=S^1\times S^1$ and $F=\CC$, Formula \eqref{e3} is  due to Frobenius.   For surfaces of higher genus, this formula was 
	first pointed out by Mednykh \cite{me}, see \cite{jo} for a direct algebraic proof and \cite[Section 5]{fq}   for a proof based  on topological field theory.
	
		One can deduce further properties of    $Z_{[c]} (M)$ from Theorem \ref{th1}. Assume that $F$ is an algebraically closed field of characteristic 0 and $k $ is a positive integer.   By \eqref{new}, we have  ${(\# G)}^{1/2}\geq \dim \rho \geq 1 $ for all $\rho\in  \widehat G_c$ and therefore    $$
	 \# \widehat G_c  \,  {(\# G)}^{2k-2}\geq  	Z_{[c]} (M_k)\geq  \# \widehat G_c  \,  {(\# G)}^{k-1}\, .$$ If $[c]\neq 0$, then $  \dim \rho \geq  2$ for all $\rho\in  \widehat G_c$ and
	 we obtain a more precise estimate from above $\# \widehat G_c  \,  {(\# G/2)}^{2k-2}\geq  	Z_{[c]} (M_k)$.
	 
	 If $\#G=q^N$, where $q$ is a prime integer and $N\geq 1$,   then $Z_{[c]} (M_k)\in \ZZ$ is  divisible by $q^{ N/2 }$  for even $N$   and by $q^{(N+1)/2 }$ for odd $N$.

	 Formula \eqref{e2} is suggested by the fact that    the Dijkgraaf-Witten invariant   (in an arbitrary dimension $n$)  can be extended to an $n$-dimensional   TQFT, see   \cite{dw}, \cite{wa}, \cite{fq}, \cite{fr}. Two-dimensional TQFT's  are known to arise from semisimple algebras, see  \cite{fhk}. We show that  the 2-dimensional Dijkgraaf-Witten invariant $Z_{[c]}$  arises from  the $c$-twisted group algebra of $G$. Then we split  this algebra   as a direct sum of matrix algebras numerated by the elements of $\widehat G_c$ and   use a computation of the state sum invariants of surfaces derived from matrix algebras. Note that the proof of   \eqref{e2}, detailed in   Sections \ref{se2}  and \ref{se3},    actually does not use the notion of a TQFT.  A version of these results for non-orientable surfaces is   discussed in Section \ref{se4}.
	
	The author is indebted to Noah Snyder for a useful discussion of the non-orientable case.

	 \section{State sum invariants of surfaces}\label{se2}

	 We recall   the state sum invariants of oriented surfaces associated with semisimple algebras, see  \cite{fhk}.
	 
	 Let ${\mathcal A}$ be a finite-dimensional algebra over a field $F$. For   $a\in {\mathcal A}$, let $T(a)\in F$ be the trace of the $F$-linear homomorphism ${\mathcal A}\to {\mathcal A}$ sending any $x\in {\mathcal A}$ to $ax$. The resulting $F$-linear mapping $T:{\mathcal A} \to F$ is called the {\it trace homomorphism}. Clearly, $T(ab)=T(ba)$ for all $a,b \in {\mathcal A}$.  Therefore the bilinear form    $T^{(2)}:{\mathcal A}\otimes {\mathcal A} \to F$, defined by $ T^{(2)}(a\otimes b)= T(ab)$  is   symmetric. Assume      that   ${\mathcal A}$ is semisimple in the sense that the form  $T^{(2)}$  is non-degenerate.  We use the adjoint isomorphism ${\rm {ad}}\, T^{(2)} $  to identify   ${\mathcal A}$ with the dual vector space $\Hom_F({\mathcal A},F)$.  Using this identification and dualizing    $T^{(2)}:{\mathcal A}\otimes {\mathcal A} \to F$, we obtain a homomorphism $ F\to  {\mathcal A}\otimes {\mathcal A}$.  It  sends $1\in F$ to a vector $$v=\sum_i v^1_i \otimes v^2_i \in {\mathcal A}\otimes {\mathcal A}\, ,$$ where $i$ runs over a finite set of indices. The vector  $v=v({\mathcal A})$ is 
	 uniquely defined by the identity
	\begin{equation}\label{plom} T (ab)=\sum_i T(av^1_i) \,T( b v^2_i )\end{equation}  for all $a,b\in {\mathcal A}$. The vector $v$  is symmetric in the sense that $ \sum_i v^1_i \otimes v^2_i = \sum_i v^2_i \otimes v^1_i$. 
	 
	 Consider a closed connected oriented surface $M$ and fix a triangulation of $M$. 
	 We endow all   2-simplices of  the triangulation of   $M$ with distinguished orientation induced by the one in $M$.
A {\it flag} of $M$ is a pair (a 2-simplex of the  triangulation, an edge of this 2-simplex). Let    $\{{\mathcal A}_f\}_{f }$ be a set of   copies of ${\mathcal A}$ numerated by all flags $f$ of $M$.  Every edge $e$ of  (the triangulation of) $M$ is incident to two 2-simplices    $\Delta , \Delta'  $ of $M$. Let $v_e\in {\mathcal A}_{(\Delta , e)}\otimes {\mathcal A}_{(\Delta' , e)}$ be a copy of  $v\in {\mathcal A}\otimes {\mathcal A}$. The symmetry of $v$  ensures  that  $v_e$ is well-defined.
 Set $V=\otimes_e v_e\in  \bigotimes_{f } {\mathcal A}_f$, where $e$ runs over all edges of $M$ and $f$ runs over all flags of $M$.

We say that a trilinear form  $U:{\mathcal A}\otimes {\mathcal A} \otimes {\mathcal A} \to F $ is 	{\it cyclically symmetric} if 
$$U (a \otimes b \otimes c)=  U (c\otimes a  \otimes b)   $$
   for all $a, b, c \in {\mathcal A}$.  Then  $U $
   induces a homomorphism $\widetilde U:\bigotimes_{f } {\mathcal A}_f\to F$ as follows. 
   Every 2-simplex $\Delta$ of $M$ has three edges $e_1,e_2,e_3$   numerated  so that following along the  boundary of $ \Delta$ in the direction determined by the distinguished orientation of  $\Delta$, one meets  consecutively   $e_1,e_2, e_3$. Since ${\mathcal A}_{(\Delta, e_i)}= {\mathcal A}$ for $i=1, 2, 3$, the   form $U $ induces a 
	  trilinear form 
	  $$U_\Delta: {\mathcal A}_{(\Delta, e_1)} \otimes {\mathcal A}_{(\Delta, e_2)} \otimes {\mathcal A}_{(\Delta, e_3)}\to F.$$
	  This  form   is cyclically symmetric and therefore independent of the numeration of the edges of $\Delta$. The tensor product  $\otimes_\Delta \,U_\Delta$ over all 2-simplices $\Delta$ of $M$ is 
	  a homomorphism $ \bigotimes_{f } {\mathcal A}_f\to F$ denoted $\widetilde U$.

The    form  $T^{(3)}:{\mathcal A}\otimes {\mathcal A} \otimes {\mathcal A} \to F $  
   sending $a \otimes b \otimes c$ to $T(abc)$ for all $a,b,c \in {\mathcal A}$ is cyclically symmetric.   Consider the induced  homomorphism $\widetilde T^{(3)}: \bigotimes_{f } {\mathcal A}_f\to F$ and set   $I_{\mathcal A}(M)=\widetilde T^{(3)} (V)\in F$. The key property of $I_{\mathcal A}(M)$ is the independence of the choice of triangulation of $M$. It is verified by checking the invariance of $I_{\mathcal A}(M)$ under the Pachner moves on the triangulations.

The direct product and the tensor product of two semisimple algebras are semisimple algebras.
The invariant $I_{\mathcal A}(M)$ is additive with respect to direct products of algebras and multiplicative with respect to tensor products of algebras.

     \begin{lemma}\label{le1}  We have $I_{\mathcal A}(S^2)=  (\dim_F {\mathcal A})\cdot 1_F$. \end{lemma}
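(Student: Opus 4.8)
The plan is to compute $I_{\mathcal A}(S^2)$ directly from a convenient triangulation, using the triangulation-independence of $I_{\mathcal A}(M)$ that has already been established. The simplest triangulation of $S^2$ is the one with two 2-simplices $\Delta$ and $\Delta'$ glued along their three edges $e_1, e_2, e_3$ (the boundary of a 3-simplex, i.e.\ the ``bigon-free'' double triangle). This triangulation has $3$ edges and $6$ flags $f$. First I would write out $V = v_{e_1} \otimes v_{e_2} \otimes v_{e_3} \in \bigotimes_f {\mathcal A}_f$ and then apply $\widetilde{T^{(3)}} = U_\Delta \otimes U_{\Delta'}$, where each $U_\Delta$ is the copy of $T^{(3)}(a\otimes b\otimes c) = T(abc)$ acting on the three tensor factors indexed by the flags of $\Delta$.

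The main computation is then to expand
\[
I_{\mathcal A}(S^2) = \sum_{i,j,k} T\bigl(v^1_i v^1_j v^1_k\bigr)\, T\bigl(v^2_i v^2_j v^2_k\bigr),
\]
where the two orientations of $\Delta$ and $\Delta'$ (opposite, since they glue to an oriented $S^2$) ensure that the first factor sees the edges of $\Delta$ in one cyclic order and the second factor sees them in the reverse cyclic order, producing the two ``sides'' $v^1_\bullet$ and $v^2_\bullet$ of each $v_e$ consistently. The key step is to collapse this triple sum using the defining identity \eqref{plom}, $T(ab) = \sum_i T(a v^1_i)\,T(b v^2_i)$, twice: summing over one index, say $k$, turns $T(v^1_i v^1_j v^1_k)\,T(v^2_i v^2_j v^2_k)$ into something like $T(v^1_j\, v^2_i\, v^1_i\, v^2_j)$ or, after using cyclicity of the trace and the symmetry of $v$, into $\sum_j T(v^1_j u\, v^2_j)$ where $u = \sum_i v^2_i v^1_i$; a second application of \eqref{plom} then reduces this to $T(u) = \sum_i T(v^1_i v^2_i)$. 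Finally I would identify $\sum_i T(v^1_i v^2_i)$ with $\dim_F {\mathcal A}$: applying \eqref{plom} with $a = b = 1$ gives $T(1) = \dim_F {\mathcal A}\cdot 1_F$ on the left (the trace of left multiplication by $1$ on ${\mathcal A}$ is the dimension), and $\sum_i T(v^1_i)\,T(v^2_i)$ on the right, which is not quite what we want; instead the cleaner route is to observe that $v = \sum_i v^1_i \otimes v^2_i$ is, by construction, the image of $1\in F$ under the coevaluation dual to $T^{(2)}$, so $\sum_i T^{(2)}(v^1_i \otimes v^2_i) = \sum_i T(v^1_i v^2_i)$ equals the trace of the identity endomorphism of ${\mathcal A}$ under the self-duality ${\rm ad}\,T^{(2)}$, which is exactly $\dim_F {\mathcal A}$.

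The step I expect to be the main obstacle is bookkeeping the cyclic orders and the pairing of the two tensor slots of each $v_e$ against the two triangles: one must check that the orientations force the trace $T(v^1_i v^1_j v^1_k)$ to pair the ``$\Delta$-sides'' of all three $v_e$ and $T(v^2_i v^2_j v^2_k)$ to pair the ``$\Delta'$-sides,'' in a cyclic pattern compatible across the shared boundary, so that the two applications of \eqref{plom} actually telescope. Once the correct contraction pattern is pinned down, the symmetry of $v$ (which makes $v_e$ well-defined) and the cyclic symmetry of $T^{(3)}$ make the algebra routine, and the only genuinely external input needed is the identification of $\sum_i T(v^1_i v^2_i)$ with $\dim_F {\mathcal A}$, which follows formally from $v$ being the copairing dual to the nondegenerate form $T^{(2)}$. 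Alternatively, as a sanity check I would verify the formula on ${\mathcal A} = F$ (where $v = 1\otimes 1$, $T = \mathrm{id}$, and $I_{\mathcal A}(S^2) = 1 = \dim_F F$) and on a matrix algebra, where the answer $\dim_F M_n(F) = n^2$ should reproduce the known state-sum value and is consistent with Formula \eqref{new} once the splitting of the twisted group algebra is in place.
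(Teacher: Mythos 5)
Your overall strategy --- evaluate the state sum on the minimal cell decomposition of $S^2$ consisting of two triangles glued along three edges, then contract using \eqref{plom} --- is viable and genuinely different from the paper's, which passes to the dual skeleton formalism and uses the circle $S^1\subset S^2$ with one vertex and one edge. But the contraction does not land where you say it does. Write $u=\sum_i v^1_i v^2_i$, which equals $\sum_i v^2_i v^1_i$ by the symmetry of $v$. With the orientations bookkept correctly, the state sum is $\sum_{i,j,k}T(v^1_iv^1_jv^1_k)\,T(v^2_kv^2_jv^2_i)$; one application of \eqref{plom} to the index $k$ (after cyclically rotating the second trace to $T((v^2_jv^2_i)v^2_k)$) yields $\sum_{i,j}T(v^1_iv^1_jv^2_jv^2_i)$, and cyclicity of $T$ then gives $T(u^2)$, not $\sum_i T(v^1_iv^2_i)=T(u)$. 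Your ``second application of \eqref{plom}'' is not an application of \eqref{plom} at all: $\sum_j T(v^1_j u\, v^2_j)$ is a sum of single traces, equal by cyclicity to $T(u^2)$, and the identity $\sum_j T(v^1_j a\, v^2_j)=T(a)$ that you are implicitly invoking is, by non-degeneracy of $T^{(2)}$, equivalent to $u=1_{\mathcal A}$.

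That equality $u=1_{\mathcal A}$ is precisely the non-formal content of the lemma, and it is where your argument has a gap. Your closing observation that $\sum_i T(v^1_iv^2_i)=(\dim_F{\mathcal A})\cdot 1_F$ is correct and purely formal (it is the trace of $\id_{\mathcal A}$ computed through the self-duality ${\rm {ad}}\,T^{(2)}$), but it evaluates $T(u)$, whereas your triangulation produces $T(u^2)$; and $T(u^2)=\dim_F{\mathcal A}$ is false for a general nondegenerate symmetric form in place of $T^{(2)}$ (for ${\mathcal A}=F$ with the form rescaled so that $1\otimes 1\mapsto\lambda$, one gets $u=\lambda^{-1}$, hence $T(u)=1$ but $T(u^2)=\lambda^{-1}$). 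What saves the argument is that $T$ is specifically the trace of the left regular representation, and the paper's proof devotes its main computation to exactly this point: the expansion ${\rm {Tr}}(f)=\sum_i T(f(v^1_i)v^2_i)$ applied to $f=f_a$ gives $T(a)=T(au)$ for all $a$, whence $u=1_{\mathcal A}$ by non-degeneracy, and then $T(u^2)=T(1_{\mathcal A})=(\dim_F{\mathcal A})\cdot 1_F$. Once you insert this step your route closes up; without it the proof is incomplete. (A minor point: the two-triangle complex is not the boundary of a $3$-simplex and is not a simplicial triangulation in the strict sense, but this is harmless given invariance under the Pachner moves.)
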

    
    \begin{proof} To compute $I_{\mathcal A}(S^2)$, we use the technique of skeletons   (this technique will not be used elsewhere in this paper).  A {\it skeleton of} a   surface  $M$ is a finite  graph embedded in $M$ whose   complement in $M$ consists of open 2-disks. A triangulation of $M$ gives rise to a skeleton of $M$ whose vertices are the barycenters of the 2-simplices of the  triangulation and whose edges are dual to the edges of the triangulation. One can rewrite the definition of $I_{\mathcal A}(M)$ in terms of state sums on    skeletons, see \cite{fhk}, \cite{tu}.  Namely, one assigns  $v=\sum_i v^1_i \otimes v^2_i$ to each edge of the skeleton meaning that  the index $i$ is assigned to the edge, the element $v^1_i$ of ${\mathcal A}$ is assigned to one half-edge and $v^2_i$ to the other half-edge. For each vertex of the skeleton, one   cyclically multiplies the elements of ${\mathcal A}$ assigned in this way  to all incident half-edges and evaluates $T$ on this product. These   values of $T$   are   multiplied over all vertices of the skeleton and the results are summed over all the indices $i$ sitting on the edges.  The  resulting sum is equal to $I_{\mathcal A}(M)$.
    
The 2-sphere $S^2$ has a skeleton $S^1\subset S^2$ having one vertex and one edge. This gives  $I_{\mathcal A}(S^2)= \sum_i T( v^1_i v^2_i)$. To compute the latter expression, we can assume that  the vectors $\{v^1_i\}_{i }$ in the expansion $v=\sum_i v^1_i \otimes v^2_i$  form a basis of  ${\mathcal A}$. Formula \eqref{plom} implies that 
    $$T^{(2)} (a, b)= T^{(2)} (a, \sum_i  T(b v^2_i)   v^1_i)\, ,$$ where $T^{(2)}(a,b)=T(ab)$  for  $a,b\in {\mathcal A}$. Since the bilinear form $T^{(2)}$  is non-degenerate,   $b=\sum_i   T(b v^2_i)   v^1_i$ for all $b\in {\mathcal A}$. For $b=v^1_j$, this gives  $v^1_j=  \sum_i   T(v^1_j v^2_i)   v^1_i$. Since   $\{v^1_i\}_{i }$ is a basis of ${\mathcal A}$, we have $T(v^1_j v^2_i)=1$ if $i=j$ and $T(v^1_j v^2_i)=0$ if  $i\neq j$. 
    
  The trace of any  $F$-linear homomorphism $f:{\mathcal A}\to {\mathcal A}$ can be expanded via the trace homomorphism $T:{\mathcal A}\to F$ as follows. Represent $f$ by a   matrix  $(f_{i,j})$  over $F$  in the basis 
  $\{v^1_i\}_{i }$ so that   $f(v^1_i)=\sum_j f_{i,j} v^1_j$ for all $i$. Then  $${\rm {Tr}}(f)
  =\sum_i f_{i,i}= \sum_{i,j} f_{i,j} T(v^1_j v^2_i)=\sum_i T(f(v^1_i) v^2_i).$$
Pick $a\in {\mathcal A}$ and  consider  the homomorphism $f_a:{\mathcal A}\to {\mathcal A}$ sending any $x\in {\mathcal A}$ to $ax$.
By the previous formula,  $$T^{(2)} (a, 1_{\mathcal A})=T(a)={\rm {Tr}} (f_a)=T(  \sum_i  a v^1_i  v^2_i) =T^{(2)} (a,  \sum_i    v^1_i  v^2_i) \,,$$  
where $1_{\mathcal A}$ is the unit of ${\mathcal A}$. The non-degeneracy 
of $T^{(2)}$ implies that $\sum_i  v^1_i  v^2_i=1_{\mathcal A}$. Thus, 
$$I_{\mathcal A}(S^2) = T(\sum_i  v^1_i  v^2_i)=T(1_{\mathcal A})=  (\dim_F {\mathcal A})\cdot 1_F\, .$$
 \end{proof}
     
{\bf Example.} Let ${\mathcal A}={\rm {Mat}}_{d } (F)$ be  the algebra of $(d  \times d )$-matrices over   $F$ with $d\geq 1$. This algebra is semisimple if and only if   $d$ is invertible in $F$ and then  
$I_{ {\mathcal A}} (M)=d^{\chi (M)}\cdot 1_F$  for any closed connected oriented surface $M$, see \cite[Theorem 4.2]{sn}. For $d=1$, we obtain $I_{ F} (M)=  1_F$ for all  $M$. 
 
	 \section{Invariants derived from twisted group algebras}\label{se3}

	  Let $G$ be a group and $ F[G]$ be the vector space over a field $F$ with basis $G$. A  normalized 2-cocycle $c:G\times G\to F^*$ gives rise to a multiplication law $\cdot  $ on $F[G]$   by $	 g_1\cdot  g_2= c(g_1, g_2) \, g_1 g_2$, where $g_1,g_2$ run over $G$ and $g_1g_2\in G$ is the  product in $G$. The vector space $ F[G]$ with this multiplication is an associative algebra and  the neutral element  $1\in G \subset F[G]$ is its unit. This algebra is  called the {\it twisted group algebra of} $G$ and denoted $A^{(c)}$.
It is easy to check that  the isomorphism type of $A^{(c)}$ depends only on the cohomology class  $[c]\in H^2(G;F^*)$.

From now on,    $G$ is a finite group whose order $\#G$ is invertible in   $F$. The  algebra $A^{(c)}$ is $(\#G)$-dimensional and  the trace homomorphism $T:A^{(c)} \to F$ defined in Section \ref{se2} sends $1\in G $ to $\#G$ and sends all other basis vectors of $A^{(c)}$ to zero.  The associated bilinear form    $T^{(2)}:A^{(c)}\otimes A^{(c)} \to F$ sends a  pair of basis vectors $g_1,g_2\in G$
to $\#G$  if $g_2=g_1^{-1}$ and to 0 otherwise. This form is non-degenerate and so the algebra $A^{(c)}$ is semisimple. The following theorem shows that the state sum  invariant of surfaces $I_{A^{(c)}}$  is equivalent to    the Dijkgraaf-Witten invariant derived from   $[c]\in H^2(G;F^*)$.
 
    \begin{theor}\label{bdbdbdad}  For  any normalized 2-cocycle $c:G\times G\to F^*$ on   $G$ and any closed connected oriented surface $M$,   $$Z_{[c]} (M)=(\#G)^{-\chi(M)} \, I_{A^{(c)}}(M)\,.$$\end{theor}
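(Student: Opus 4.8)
The plan is to compute $I_{A^{(c)}}(M)$ directly from a triangulation and to recognize the resulting state sum as the standard simplicial formula for $Z_{[c]}(M)$. Since $I_{A^{(c)}}$ does not depend on the triangulation, I may fix one equipped with a total ordering of its vertices (a \emph{branching}); for an edge with endpoints $a<b$ and any map $\phi$ defined on edges I write $\phi(ab)$ for the value of $\phi$ on that edge oriented from $a$ to $b$. Write $V_0,E,F_2$ for the sets of vertices, edges and $2$-simplices, so $\chi(M)=\#V_0-\#E+\#F_2$ and, since $M$ is a closed surface, each edge lies in exactly two $2$-simplices.

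First I would pin down the vector $v=v(A^{(c)})$. From the formulas for $T$ and $T^{(2)}$ on $A^{(c)}$ recalled in Section~\ref{se3}, the defining identity \eqref{plom} forces
\[
v=(\#G)^{-1}\sum_{g\in G}\bigl(c(g,g^{-1})\bigr)^{-1}\,g\otimes g^{-1},
\]
where the cocycle relation \eqref{d1} gives $c(g,g^{-1})=c(g^{-1},g)$, which makes the symmetry of $v$ visible under $g\mapsto g^{-1}$. Placing a copy of $v$ on each edge $e$, using the orientation of $M$ to decide which of the two adjacent $2$-simplices receives the tensor factor $g$ and which receives $g^{-1}$, and expanding $I_{A^{(c)}}(M)=\widetilde T^{(3)}(V)$, one obtains a sum over all maps $\phi$ from edges to $G$. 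The contribution of a $2$-simplex $v_0<v_1<v_2$ is the value of $T$ on a product of three basis elements of $A^{(c)}$, hence it vanishes unless that product equals $1$; this \emph{flatness} condition reads $\phi(v_0v_1)\,\phi(v_1v_2)=\phi(v_0v_2)$. Collecting the scalar $(\#G)^{-1}$ from each copy of $v$ and the scalar $\#G$ from each evaluation of $T$ produces the overall factor $(\#G)^{\#F_2-\#E}=(\#G)^{\chi(M)-\#V_0}$, so $I_{A^{(c)}}(M)=(\#G)^{\chi(M)-\#V_0}\sum_\phi W(\phi)$, the sum being over flat $\phi$ and $W(\phi)$ a certain product of values of $c$.

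The step I expect to be the main obstacle is showing $W(\phi)=\langle f_{\gamma(\phi)}^{*}[c],[M]\rangle$, where $\gamma(\phi)\in\Hom(\pi,G)$ is the holonomy of $\phi$ (a flat coloring is the same thing as a simplicial map of $M$ into the nerve of $G$; the conjugation ambiguity in $\gamma(\phi)$ is harmless, since the pairing is a homotopy invariant). For such a simplicial map the pairing is given by the classical formula $\prod_{\Delta}c\bigl(\phi(v_0v_1),\phi(v_1v_2)\bigr)^{\epsilon(\Delta)}$, where $\Delta$ runs over the $2$-simplices $v_0<v_1<v_2$ and $\epsilon(\Delta)=\pm1$ according to whether the orientation of $\Delta$ induced by $M$ agrees with the order $v_0,v_1,v_2$. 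A triangle-by-triangle computation, rewriting each evaluation of $T$ by means of \eqref{d1}, shows that a single $2$-simplex $\Delta$ contributes this desired factor times $\prod_e c(g_e,g_e^{-1})$, the product running over the edges $e$ of $\Delta$ that the boundary of $\Delta$ traverses against their branching orientation (here $g_e=\phi(e)$). Since $M$ is oriented, the two $2$-simplices meeting along an edge $e$ induce opposite orientations on $e$, so $e$ is traversed the wrong way by exactly one of them; hence the product over all $2$-simplices of these corrections equals $\prod_{e\in E}c(g_e,g_e^{-1})$, which is exactly cancelled by the factors $\bigl(c(g_e,g_e^{-1})\bigr)^{-1}$ carried by the copies of $v$. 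This yields $W(\phi)=\langle f_{\gamma(\phi)}^{*}[c],[M]\rangle$ and, in particular, shows that $W(\phi)$ depends only on $\gamma(\phi)$.

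Finally I would count flat colorings. Fixing a base vertex and a spanning tree of the $1$-skeleton and gauge-transforming $\phi$ so as to be trivial on the tree defines a surjection from the set of flat colorings onto $\Hom(\pi,G)$; the group $G^{\,\#V_0-1}$ of gauge transformations fixing the base vertex acts simply transitively on each fibre, so every fibre has $(\#G)^{\#V_0-1}$ elements and $W$ is constant on it with value $\langle f_\gamma^{*}[c],[M]\rangle$. Therefore
\[
\sum_\phi W(\phi)=(\#G)^{\#V_0-1}\sum_{\gamma\in\Hom(\pi,G)}\langle f_\gamma^{*}[c],[M]\rangle=(\#G)^{\#V_0}\,Z_{[c]}(M),
\]
and substituting this into the previous formula gives $I_{A^{(c)}}(M)=(\#G)^{\chi(M)}Z_{[c]}(M)$, which is the assertion. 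Note that orientability of $M$ enters twice: it makes $\widetilde T^{(3)}$ well defined through the cyclic symmetry of $T^{(3)}$, and it is responsible for the cancellation of the factors $c(g_e,g_e^{-1})$.
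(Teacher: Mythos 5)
Your proposal is correct and follows essentially the same route as the paper's proof: the same explicit formula for $v$, the same expansion of $I_{A^{(c)}}(M)$ as a state sum over admissible (flat) labelings, the same triangle-by-triangle identification of the weight with $\prod_\Delta c(\ell_1^\Delta,\ell_2^\Delta)^{\varepsilon_\Delta}$ up to $c(g,g^{-1})$ corrections that cancel against the factors carried by $v$, and the same spanning-tree count of $(\#G)^{k_0-1}$ labelings per holonomy. The only differences are cosmetic (gauge-transformation language for the fibre count versus the paper's explicit parametrization).
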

    
    \begin{proof} Fix a triangulation of $M$ and let $k_0$, $k_1$, $k_2$ be respectively  the number of vertices, edges, and 2-simplices of this triangulation.    By an {\it oriented edge} of $M$ we mean an edge of (the triangulation of)  $M$ endowed with an arbitrary orientation.  For an  oriented edge $e$ of $M$,   the same edge with opposite orientation is denoted $-e$.  A {\it labeling} of $M$ is a mapping  $\ell$  from the set of oriented edges of    $M$ to $G$ such that $\ell (-e)=(\ell (e))^{-1}$ for all oriented edges $e$ of $M$.     A labeling $\ell$ of $M$ is   {\it admissible} if  
    $  \ell(e_1)\, \ell(e_2)\, \ell(e_3)=1$  
for any    three consecutive oriented edges  $e_1,e_2,e_3$ forming the boundary of a 2-simplex  of (the  triangulation of) $M$. Denote the set of labelings of $M$ by $L(M)$ and denote its subset formed by the admissible labelings by $L_a(M)$.
   
  Given a labeling $\ell\in L (M)$, we   assign to any path $p$ in $M$ formed by consecutive oriented edges $e_1,  \ldots, e_N$ the product  $ \ell (p)=   \ell (e_1) \ell (e_2) \cdots \ell (e_N) \in G$. For  admissible $ \ell $, this product    is a homotopy invariant of $p$:  if two   paths $p,p'$ have the same endpoints and are homotopic (relative to the endpoints), then $ \ell(p)= \ell (p')$.

  Fix a base vertex $m_0\in M$ and set $\pi=\pi_1(M,m_0)$. For any $\ell\in L_a(M)$, applying the mapping $p\mapsto   \ell (p)$ to the loops in $M$ based at $m_0$, we obtain   a group homomorphism $ \pi \to G$ denoted $\Gamma (\ell)$.   The formula $\ell \mapsto \Gamma (\ell)$ defines a   mapping   $\Gamma:L_a (M)\to \Hom (\pi, G) $.

  We claim that  the pre-image $\Gamma^{-1}(\gamma)$ of any $\gamma\in \Hom (\pi, G)$ consists of   $(\#G)^{k_0-1}$ admissible labelings. To see this, 
fix a spanning tree $R\subset M$ formed by all $k_0$ vertices and $k_0-1$ edges of $M$; here we use that $M$ is connected. For every vertex $m$ of $M$, there is a  (unique up to homotopy)  path $p_m$ in $R$ formed by oriented edges of $R$ and leading from   $m_0 $ to $m$. Any oriented edge $e$ of $M$ not lying in $R$ determines a loop $p_{s_e} e\, (p_{t_e})^{-1}$, where $s_e$ and $t_e$ are    the initial and the terminal endpoints of $e$, respectively.   The homotopy classes of such  loops  corresponding to all oriented edges $e$ of $M$ not lying in $R$ generate the fundamental group $\pi $. Therefore  the pre-image of   $\gamma\in \Hom (\pi, G)$ consists of the  labelings $\ell \in L_a(M)$  such that $\ell (p_{s_e} e \, (p_{t_e})^{-1})=\gamma (p_{s_e} e \,(p_{t_e})^{-1})$ for all   $e$ as above. This equality may be rewritten as
\begin{equation}\label{49}\ell (e)=(\ell (p_{s_e}))^{-1} \,\, \gamma (p_{s_e} e\,  (p_{t_e})^{-1}) \,\, \ell (p_{t_e})\,. \end{equation} Therefore to specify $\ell\in \Gamma^{-1} (\gamma)$, we can assign arbitrary labels   to the  $k_0-1$ edges    of $R$   oriented away from $m_0$,  the inverse labels to    the  same edges       oriented towards $m_0$,  and    the labels determined from \eqref{49} to   the  oriented edges of $M$ not lying in $R$. The resulting labeling is necessarily admissible.  Hence, $\#\, \Gamma^{-1}(\gamma)= (\#G)^{k_0-1}$.

Formula \eqref{e1} and the results above imply that 
$$ Z_{[c]} (M)=(\#G)^{-k_0}\sum_{\ell \in L_a(M)} \langle (f_{\Gamma(\ell)})^* ([c]), [M]\rangle \in F\, ,$$
where   $f_{\Gamma(\ell)}$ is a mapping from the pair $(M,m_0)$ to the pair (an Eilenberg-MacLane space $X$ of type $K(G,1)$, a base point $x\in X$)  
such that the induced homomorphism of fundamental groups   is equal to $\Gamma(\ell): \pi \to  G$.     
Choosing in the role of $X$ the canonical realization of the Eilenberg-MacLane space $K(G,1)$ associated with the standard resolution of the $\ZZ[G]$-module $\ZZ$ (see, for instance, \cite{br}), we can  compute   $\langle (f_{\Gamma(\ell)})^* ([c]), [M]\rangle$ as follows.
Fix a total order $<$ on the set of all vertices of $M$. A 2-simplex $\Delta$ of $M$ has three vertices $A,B, C$ with $A<B<C$. Set $\varepsilon_\Delta=+1$ if the distinguished orientation of   $\Delta$   (induced by the one on $M$) induces  the direction from $A$ to $B$ on the edge $AB\subset \partial \Delta$ and set $\varepsilon_\Delta=-1$ otherwise. Let $\ell_1^\Delta=\ell (AB)$ and $ \ell_2^\Delta= \ell (BC)$  be the labels of the edges $AB$,  $BC$ oriented  from $A$ to $B$ and from $B$ to $C$, respectively. Then 
$$\langle (f_{\Gamma(\ell)})^* ([c]), [M]\rangle=\prod_\Delta \, c(\ell_1^\Delta,\ell_2^\Delta)^{\varepsilon_\Delta}\in F^*\,,$$
where $\Delta$ runs over all 2-simplices of   $M$.
Hence, 
\begin{equation}\label{e491} Z_{[c]} (M)=(\#G)^{-k_0}\sum_{\ell \in L_a(M)} \, \prod_\Delta \,c(\ell_1^\Delta,\ell_2^\Delta)^{\varepsilon_\Delta} \, .\end{equation}

  We now compute   $I_{{\mathcal A}}(M)\in F$ for ${\mathcal A}=A^{(c)}$. First, with each labeling $\ell\in L(M)$ we associate  an element $\langle c, \ell \rangle $ of $F^*$. Observe  that $c(g,g^{-1})= c(g^{-1}, g)$ for all $g\in G$ (this is obtained from \eqref{d1} by the substitution $g_1=g_3=g$ and $ g_2=g^{-1}$). Therefore
   for any oriented edge $e$ of $M$, the expression $c(\ell(e), \ell(-e))=c(\ell(-e), \ell(e))\in F^*$ does not depend on the orientation of $e$ and may be associated with  the underlying unoriented edge. 
   Set
   $$\langle c, \ell \rangle =\prod_e c(\ell(e), \ell(-e))\in F^*\, ,$$
   where $e$ runs over all non-oriented edges of $M$.

   Let as above $\{{\mathcal A}_f\}_{f }$ be a set of   copies of ${\mathcal A}$ numerated by all flags $f$ of $M$. With 
a   labeling $\ell\in L(M)$ we associate  a vector  
   $V(\ell)\in \otimes_f \, {\mathcal A}_f$ as follows.
   For a flag $f$ formed by a 2-simplex $\Delta$ and its edge $e$, the distinguished orientation of  $\Delta$ induces an orientation of $e$. Let $\ell (f)\in G\subset {\mathcal A}={\mathcal A}_f$ be the value of $\ell$ on this oriented edge. Set
   $V(\ell)= \otimes_f \, \ell (f)$. 
   
Recall the homomorphisms $T:{\mathcal A}\to F$,  $T^{(3)}:{\mathcal A}\otimes {\mathcal A} \otimes {\mathcal A} \to F $, and the vector $v\in {\mathcal A} \otimes {\mathcal A} $ introduced in Section \ref{se2}. It  is easy to compute that  $$v=(\# G)^{-1} \sum_{g\in G} (c(g,g^{-1}))^{-1} g \otimes g^{-1}\, .$$
Therefore the vector $V=\otimes_e v_e\in \otimes_f \, {\mathcal A}_f$    is computed by 
   $$V= (\#G)^{-k_1}\sum_{\ell \in L(M)}  \langle c, \ell \rangle^{-1} \, V(\ell)\, .$$
 
For any $g_1, g_2, g_3 \in G$, we have  $ T^{(3)} (g_1\otimes g_2 \otimes g_3)=0$ if $g_1g_2g_3 \neq 1$
   and $$ T^{(3)} (g_1\otimes g_2 \otimes g_3)=T(g_1g_2g_3)= \#G \,c(g_1,g_2) \, c(g_1 g_2, g_3)$$ if $g_1g_2g_3=1$. Consider the homomorphism $U=(\# G)^{-1}\,  T^{(3)}: {\mathcal A}\otimes {\mathcal A} \otimes {\mathcal A} \to F $ sending 
   $g_1\otimes g_2 \otimes g_3$ to $0$ if $g_1g_2g_3 \neq 1$
and to $c(g_1,g_2) \, c(g_1 g_2, g_3)$ if $g_1g_2g_3=1$.   The cyclic symmetry of $T^{(3)}$ implies that $U$ is cyclically symmetric.
  Then
   $$I_{\mathcal A}(M)=\widetilde T^{(3)} (V)=(\#G)^{k_2} \,\widetilde U (V)   = (\#G)^{k_2-k_1} \sum_{\ell \in L(M)} \langle c, \ell \rangle^{-1} \,\widetilde U (V(\ell))\, .$$
   It is clear that $k_2-k_1=\chi(M)-k_0$ and $\widetilde U(V(\ell))=0$ for non-admissible $\ell$. Hence \begin{equation}\label{eq5005} I_{\mathcal A}(M) = (\#G)^{\chi(M)-k_0} \sum_{\ell \in L_a(M)} \langle c, \ell \rangle^{-1} \,\widetilde U (V(\ell)) \, .\end{equation}
  
 To compute  $ \widetilde U (V(\ell))$ for   $\ell\in L_a(M)$,  we use the  total order $<$ on the set of  vertices of $M$. With a 2-simplex $\Delta=ABC$ of $M$ with $A<B<C$ we associated above a sign $ \varepsilon_\Delta=\pm 1$ and two  labels   $g_1=\ell_1^\Delta =\ell (AB)\in G$ and $g_2= \ell_2^\Delta=\ell (BC)\in G$. 
 Set also $g_3=\ell_3^\Delta  =\ell (CA)\in G$. The admissibility of $\ell$ implies that  $g_1   g_2
 g_3=1$. 
 The 2-simplex  $\Delta$ gives rise to the  flags $(\Delta, AB)$, $(\Delta, BC)$, and $(\Delta, CA)$. These flags contribute  to   $V(\ell)$ the tensor factor 
 $$(g_1)^{\varepsilon_\Delta} \otimes  (g_2)^{\varepsilon_\Delta} \otimes   (g_3)^{\varepsilon_\Delta}  \in {\mathcal A}_{(\Delta, AB)} \otimes {\mathcal A}_{(\Delta, BC)} \otimes {\mathcal A}_{(\Delta, CA)}  .$$
 Recall the trilinear form  $U_\Delta  :{\mathcal A}_{(\Delta, AB)} \otimes {\mathcal A}_{(\Delta, BC)} \otimes {\mathcal A}_{(\Delta, CA)} \to F$ introduced in Section \ref{se2}. 
If $\varepsilon_\Delta =+ 1$, then   $$ U_\Delta ((g_1)^{\varepsilon_\Delta} \otimes  (g_2)^{\varepsilon_\Delta} \otimes   (g_3)^{\varepsilon_\Delta})=U(g_1\otimes g_2 \otimes g_3)
$$
$$= c(g_1,  g_2 ) \, c(g_1   g_2  ,  g_3 )=c(g_1,  g_2 )
 \, c(g_3^{-1} ,  g_3 )=c(g_1,  g_2 )
 \, c(  g_3 , g_3^{-1} ).$$
If $\varepsilon_\Delta =- 1$, then   $$
U_\Delta ((g_1)^{\varepsilon_\Delta} \otimes  (g_2)^{\varepsilon_\Delta} \otimes   (g_3)^{\varepsilon_\Delta})=U_\Delta (g_1^{-1} \otimes g_2^{-1} \otimes  g_3^{-1})= U(g_3^{-1} \otimes g_2^{-1} \otimes g_1^{-1})$$
$$ = c(g_3^{-1},  g_2^{-1}) \, c(g_3^{-1}g_2^{-1}  ,  g_1^{-1}) =c(g_1   g_2,   g_2^{-1})
 \, c(   g_1, g_1^{-1} )  $$
$$ =(c(g_1,   g_2 ))^{-1} 
 \, c(   g_1, g_1^{-1} )\, c(   g_2, g_2^{-1} ) \, .$$
The last equality follows from   \eqref{d1}, where we set $g_3=g_2^{-1}$. 
In both cases 
 $$U_\Delta ((g_1)^{\varepsilon_\Delta} \otimes  (g_2)^{\varepsilon_\Delta} \otimes   (g_3)^{\varepsilon_\Delta})=c(g_1, g_2)^{\varepsilon_\Delta}  \, u_\Delta \, ,$$
 where $u_\Delta =c(g_3 ,  g_3^{-1} )$ if  $\varepsilon_\Delta =+ 1$ and $u_\Delta=c(   g_1, g_1^{-1} )\, c(   g_2, g_2^{-1} )$  if $\varepsilon_\Delta =- 1$.
 We   conclude that  
\begin{equation}\label{eq555}\widetilde U (V(\ell))=  
\prod_\Delta c(\ell_1^\Delta,\ell_2^\Delta)^{\varepsilon_\Delta} \, u_\Delta\, ,\end{equation}
where $\Delta$ runs over all 2-simplices of $M$.

We claim that  $\prod_\Delta u_\Delta  =\langle c, \ell \rangle $. Note that the product $\prod_\Delta u_\Delta$ expands as a product of the expressions $c(\ell (e), \ell (-e) )$ associated with edges $e$ of $M$. We show that every edge  $e=AB$ of $M$ with $A<B$ contributes exactly one such expression. Set  $g=\ell (AB)\in G$.   The edge $AB$ is incident to two 2-simplices $\Delta =ABC $ and  $\Delta'=ABC'$ of $M$ whose distinguished orientations  induce  on $AB = \partial \Delta \cap \partial \Delta'$ the directions from $B$ to $A$ and  from $A$ to $B$, respectively. If $B<C $, then  $\varepsilon_\Delta =-1$, $g=\ell_1^{\Delta }$,  and   $AB$ contributes   the factor $c(g, g^{-1})$ to $u_\Delta$. 
If $ C <A$, then   $\varepsilon_\Delta =-1$, $g=\ell_2^{\Delta}$,  and $AB$ contributes   the factor $c(g, g^{-1})$ to $u_\Delta$. Finally, if $A<C<B$, then 
 $\varepsilon_\Delta=+1$, $g=\ell_3^{\Delta}$,  and $u_\Delta=c(g, g^{-1})$. A similar computation shows that $AB$ contributes no factors to
  $u_{\Delta'}$. Therefore $$ \prod_\Delta u_\Delta= \prod_e c(\ell(e), \ell(-e))=\langle c, \ell \rangle \, .$$
Substituting this   in \eqref{eq555},  we obtain
$$\widetilde U (V(\ell))= 
\prod_\Delta c(\ell_1^\Delta,\ell_2^\Delta)^{\varepsilon_\Delta}\,\times \,  \langle c, \ell \rangle \,.$$
Formula \eqref{eq5005} yields 
$$ I_{\mathcal A}(M) = (\#G)^{\chi(M)-k_0} \sum_{\ell \in L_a(M)} \prod_\Delta c(\ell_1^\Delta,\ell_2^\Delta)^{\varepsilon_\Delta}  .$$
Comparing   with \eqref{e491}, we obtain the claim of the theorem. \end{proof}

 Since the   algebra $ A^{(c)}$ is finite dimensional and semisimple, the   isomorphism classes of simple $ A^{(c)}$-modules form a  finite non-empty  set $\Lambda$ (an   $ A^{(c)}$-module is {\it simple} if its only $ A^{(c)}$-submodules are itself and zero).  Let $\{V_\lambda\}_{\lambda \in \Lambda}$ be representatives of these isomorphism classes.  Then  \begin{equation}\label{dxd} A^{(c)}\cong \bigoplus_{\lambda \in \Lambda} {\rm {Mat}}_{d_\lambda} (D_\lambda) =  \bigoplus_{\lambda \in \Lambda} D_\lambda \otimes_F {\rm {Mat}}_{d_\lambda} (F) \, ,\end{equation}
 where   $D_\lambda={\rm {End}}_{A^{(c)}} (V_\lambda)$ 
is a division $F$-algebra (i.e., an $F$-algebra  in which all non-zero elements are invertible), $d_\lambda$ is the dimension of $V_\lambda$ as a $D_\lambda$-module, and   ${\rm {Mat}}_{d } (D)$ with $d\geq 1$ is the algebra of $(d  \times d )$-matrices over the ring $D$. The integer $d_\lambda$ is  invertible in $F$ for all $\lambda$, because the algebra $ {\rm {Mat}}_{d_\lambda} (D_\lambda)$ is a direct summand of $A^{(c)}$ and is therefore semisimple. Theorem \ref{bdbdbdad} implies that (under the conditions of this theorem)
\begin{equation}\label{rop} Z_{[c]} (M)=(\#G)^{-\chi(M)} \, \sum_{\lambda \in \Lambda} \,  I_{D_\lambda} (M) \,d_{\lambda}^{\chi (M)}\,.\end{equation}

We can check \eqref{rop}   directly for   $M=S^2$. Indeed, $Z_{[c]} (S^2)= (\#G)^{-1} \cdot 1_F$
and by Lemma \ref{le1}, $I_{D_\lambda} (S^2)=(\dim_F D_\lambda) \cdot 1_F$ for all $\lambda\in \Lambda$. Therefore
  \eqref{rop} for $M=S^2$  follows from the  equality  \begin{equation}\label{rop+-}\#G=\sum_{\lambda \in \Lambda} (\dim_F D_\lambda) \, d_{\lambda}^{2}\end{equation}  which   is a consequence of  \eqref{dxd}.
  
  Formulas \eqref{rop}  and \eqref{rop+-} simplify in the case  where   $F$ is   algebraically closed. Then $D_\lambda=F$ for all $\lambda\in \Lambda$ and   we obtain 
\begin{equation}\label{rop+}Z_{[c]} (M)=(\#G)^{-\chi(M)} \, \sum_{\lambda \in \Lambda}  d_{\lambda}^{\chi (M)}\cdot 1_F \,\end{equation}
and
\begin{equation}\label{rop+-+}\#G=\sum_{\lambda \in \Lambda}   \, d_{\lambda}^{2} \, .\end{equation}
In particular, $Z_{[c]} (S^1\times S^1)=\# \Lambda\cdot 1_F $. The number $\# \Lambda$, that is the number of 
  isomorphism classes of simple $ A^{(c)}$-modules, can be   computed in terms of so-called $c$-regular classes of $G$, see \cite[pp.\ 107-118]{ka}. An element $g\in G$ is {\it $c$-regular}
  if $c(g,h)=c(h,g)$ for all $h\in G$ such that $gh=hg$.  The set of $c$-regular elements of $G$ depends only on the cohomology class $[c]\in H^2(G;F^*)$ and is invariant under conjugation  in  $G$.     Since $F$ is   algebraically closed  (and as always in this paper, $\#G$ is invertible in $F$),    $\# \Lambda=r(G;c)$, where $r(G;c)$ is the number of conjugacy classes of $c$-regular elements of $G$, see \cite[p.\  117]{ka}.

{\bf Proof of Theorem \ref{th1}}.   Any $c$-representation  of $G$  in the sense of Section \ref{se1} extends by linearity to an action of $ A^{(c)}$ on the corresponding vector space.  This gives   a  bijective correspondence between  $c$-representations of $G$
 and $ A^{(c)}$-modules of finite dimension over $F$. This correspondence transforms equivalent representations  to isomorphic $ A^{(c)}$-modules and    irreducible representations  to simple modules. This allows us to rewrite all the statements of this  section in terms of 
the $c$-representations of $G$. In particular, the set   $ \widehat G_c$ of   equivalence classes of irreducible $c$-representations of $ G $   is finite and non-empty. 
 Now, Formula \eqref{rop+} directly implies   \eqref{e2}. Note also that Formula \eqref{rop+-+} implies \eqref{new}. 
 
 \begin{remar} Formula \eqref{e3} generalizes to surfaces with boundary as follows (see \cite{me2},   \cite{jo}). Let $\pi$ be the fundamental group of a compact connected oriented surface $M$ whose boundary  consists of $k\geq 1$ circles and let $x_1, \ldots, x_k$ be the conjugacy classes in $\pi$ represented by the components of $\partial M$. For any   $g_1,\ldots, g_k\in G$, the number of homomorphisms $\varphi:\pi\to G$ such that $\varphi(x_i)$ is conjugate to $g_i$ in $G$ for all $i=1,\ldots , k$ is equal to  $$ \#G \, \, \sum_{\rho\in  \widehat G} 
	   \bigl (\frac{\#G}{ \dim \rho}\bigr )^{-\chi(M)} \prod_{i=1}^k \chi_\rho (g_i)\, ({\text {mod}} \, p) \, ,$$
	   where $\chi_\rho:G\to F$ is the character  of $\rho$ and $p$ is the characteristic of   $F$. It would be interesting to give a similar generalization of \eqref{e2}.    
	  \end{remar} 
 
  \section{The non-orientable case}\label{se4}

A non-oriented version of the Dijkgraaf-Witten invariant  in  dimension $n\geq 1$ can be defined as follows.   Let $G$ be a finite group and     $\alpha\in H^n(G; \ZZ/2\ZZ)$. For a closed connected $n$-dimensional   topological manifold $M$ with fundamental group $\pi$, set 
$$Z_\alpha (M)= (\#G)^{-1}  \sum_{\gamma\in \Hom (\pi , G)} (-1)^{\langle (f_\gamma)^* (\alpha), [M]\rangle} \in \, (\#G)^{-1}\, \ZZ  \, ,$$
where  $f_\gamma:M\to K(G,1) $ is as in Section \ref{se1} and  $\langle (f_\gamma)^* (\alpha), [M]\rangle\in    \ZZ/2\ZZ$ is the value 
of   $  (f_\gamma)^* (\alpha)\in H^n(M; \ZZ/2\ZZ)$ on the fundamental class $ [M] \in H_n(M; \ZZ/2\ZZ)$. For orientable $M$, this is a special case of the definition given in Section \ref{se1}. We therefore restrict ourselves to non-orientable $M$. From now on,   $n=2$.

  We  formulate a Verlinde-type formula for $Z_{\alpha} (M)$. Fix a normalized  2-cocycle  $c$ on $G $   with values in the cyclic group of order two $\{\pm 1\}$.   
  Fix a field $F$ such that $\#G$ is invertible in $F$. 
An irreducible $c$-representation   $\rho:G\to  GL (W)$, where $W$ is a finite dimensional vector space over   $F$, is   {\it self-dual}   if there is a non-degenerate bilinear form $\langle\, , \, \rangle: W\times W\to F$ such that  $\langle \rho(g) (u) , 
\rho(g) (v) \rangle
=\langle u , v \rangle$ for  all $g\in G$ and   $u,v \in W$.   The irreducibility of $\rho$ easily implies   that the form $\langle\, , \, \rangle$ has to be either symmetric or skew-symmetric.  Set $\varepsilon_\rho=  1\in F$ in the former case and $\varepsilon_\rho=  -1\in F$ in the latter case. 
For any non-self-dual  irreducible $c$-representation   $\rho $, set $\varepsilon_\rho=  0\in F$.
  The formula $\rho\mapsto \varepsilon_\rho$ yields a well defined function $\widehat G_c\to \{-1, 0, 1\}\subset F$. This function plays the role of the Frobenius-Schur indicator   in the theory of representations over $\CC$.

Composing  $c:G\times G\to \{\pm 1\}$ with the  isomorphism $\{\pm 1\} \approx  \ZZ/2\ZZ$,   we obtain a $(\ZZ/2\ZZ)$-valued 2-cocycle on $G$. Its    cohomology class in $H^2(G; \ZZ/2\ZZ)$ is denoted $[c]$.

 \begin{theor}\label{th1nnn}  If   $F$ is an algebraically closed field, then for any  normalized 2-cocycle $c:G\times G\to \{\pm 1\}$ and any closed connected non-orientable surface $M$, 
\begin{equation}\label{e2nnn}Z_{[c]} (M)= (\#G)^{-\chi(M)} \, \sum_{\rho\in   \widehat G_c} 
	   (\varepsilon_\rho \dim \rho)^{\chi(M)}   \,.\end{equation}
	 \end{theor}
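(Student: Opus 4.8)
The plan is to mimic the orientable proof (Theorems~\ref{bdbdbdad} and \ref{th1}) but with the twisted group algebra replaced by an algebra-with-involution, since a non-orientable surface is built from triangles glued along edges with no coherent orientation. First I would set up a state-sum invariant $I_{\mathcal{A}, *}(M)$ for a semisimple $F$-algebra $\mathcal{A}$ equipped with an anti-involution $*:\mathcal{A}\to\mathcal{A}$ (an $F$-linear map with $(ab)^*=b^*a^*$, $**=\id$, $T(a^*)=T(a)$). On a triangulation of a closed connected surface $M$ (now possibly non-orientable) one still has flags, but each edge $e$ incident to two triangles $\Delta,\Delta'$ carries a copy $v_e$ of the symmetric vector $v\in\mathcal{A}\otimes\mathcal{A}$ \emph{if} the two triangles induce opposite orientations on $e$, and a copy of $(\id\otimes *)(v)$ (equivalently $(*\otimes\id)(v)$, using symmetry of $v$ and $T(a^*)=T(a)$) if they induce the same orientation on $e$. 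After choosing local orientations of the triangles, the gluing datum amounts to a $1$-cochain with $\ZZ/2\ZZ$ coefficients whose class is the first Stiefel–Whitney class $w_1(M)$; invariance under Pachner moves plus invariance under changing the local orientations needs to be checked, and this yields a well-defined $I_{\mathcal A,*}(M)\in F$ depending only on $M$.

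Next I would compute $I_{\mathcal A,*}$ on the relevant building blocks. By the multiplicativity under tensor product and additivity under direct product (which extend to the $*$-setting when $*$ respects the decomposition), it suffices to treat $\mathcal{A}=\mathrm{Mat}_d(F)$ with an anti-involution; over an algebraically closed $F$ every anti-involution of $\mathrm{Mat}_d(F)$ is, up to conjugation, $X\mapsto J X^t J^{-1}$ with $J$ symmetric or skew-symmetric, and one shows $I_{\mathcal A,*}(M)=(\eta\, d)^{\chi(M)}\cdot 1_F$ where $\eta=+1$ in the symmetric case and $\eta=-1$ in the skew case (for skew $J$, $d$ is even). This is the analogue of the Example following Lemma~\ref{le1}, done on a cellular decomposition of $M$ with few cells (e.g.\ one $0$-cell, one $2$-cell, and $1-\chi(M)$ one-cells for $M=\#^g\RR P^2$); the sign $\eta^{\chi(M)}$ enters through the crosscap edges, i.e.\ the edges where the two sides are glued with matching orientation.

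Then I would apply this to $\mathcal{A}=A^{(c)}$ with $c$ valued in $\{\pm1\}$, equipped with the anti-involution $g\mapsto c(g,g^{-1})^{-1}\, g^{-1}=c(g,g^{-1})\,g^{-1}$ (well-defined since $c(g,g^{-1})=c(g^{-1},g)\in\{\pm1\}$, and an anti-homomorphism by \eqref{d1}); one checks $T(a^*)=T(a)$. Exactly as in the proof of Theorem~\ref{bdbdbdad}, rewriting the state sum over admissible $G$-labelings of the triangulation identifies $(\#G)^{-\chi(M)} I_{A^{(c)},*}(M)$ with $Z_{[c]}(M)$ as defined by the $(-1)^{\langle\cdot\rangle}$-sum: the crosscap edges contribute the extra signs coming from $w_1$, and a $\ZZ/2\ZZ$-cochain computation matches $\prod_\Delta c(\ell_1^\Delta,\ell_2^\Delta)^{\varepsilon_\Delta}$ times the crosscap factors to $(-1)^{\langle(f_{\Gamma(\ell)})^*([c]),[M]\rangle}$. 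Decomposing $A^{(c)}=\bigoplus_\lambda \mathrm{Mat}_{d_\lambda}(F)$ (over algebraically closed $F$) as in \eqref{dxd} and noting that the anti-involution $*$ permutes the simple factors, sending $V_\lambda$ to the dual representation $V_\lambda^*$: a factor fixed by $*$ corresponds to a self-dual $\rho$ and carries the induced anti-involution of $\mathrm{Mat}_{d_\lambda}(F)$ whose type (symmetric vs.\ skew) is precisely $\varepsilon_\rho$, while a pair of factors swapped by $*$ (a non-self-dual $\rho$ together with $\rho^*$) contributes $0$ to the state sum (this is where $\varepsilon_\rho=0$ appears — the contribution of $\mathrm{Mat}_d(F)\oplus\mathrm{Mat}_d(F)$ with the swap involution vanishes, reflecting that a Möbius-type gluing of the two matrix algebras forces a trace of an off-diagonal block). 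Summing the block contributions $(\varepsilon_\rho\dim\rho)^{\chi(M)}$ over $\widehat G_c$ gives \eqref{e2nnn}.

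The main obstacle I expect is the bookkeeping in the two places where signs are produced: first, proving that $I_{\mathcal A,*}(M)$ is well-defined, i.e.\ independent of the triangulation \emph{and} of the auxiliary choice of local triangle orientations, where the relevant invariant of the gluing is $w_1(M)\in H^1(M;\ZZ/2\ZZ)$ and one must verify the state sum only sees its cohomology class (via the symmetry of $v$ and $T\circ *=T$); and second, pinning down which anti-involution type $\eta\in\{\pm1\}$ a given simple factor inherits and checking it equals the Frobenius–Schur-type indicator $\varepsilon_\rho$ of the corresponding $c$-representation, including the vanishing in the swapped (non-self-dual) case. Neither is deep, but both require care with conventions; once they are in place, the identification of the state sum with the cochain-level formula for $Z_{[c]}$ is a direct adaptation of the computation in Section~\ref{se3}.
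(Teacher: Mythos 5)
Your proposal follows essentially the same route as the paper: the state sum $I_{(\mathcal{A},\ast)}(M)$ for a semisimple $\ast$-algebra with the edge rule distinguishing coherently and incoherently glued edges, the computation $(\varepsilon d)^{\chi(M)}$ for a matrix algebra with an (anti-)involution of symmetric or skew type, the identification of $(\#G)^{-\chi(M)}I_{(A^{(c)},\ast)}(M)$ with $Z_{[c]}(M)$ via admissible labelings, and the block decomposition of $A^{(c)}$ in which $\ast$-fixed factors yield self-dual representations with indicator $\varepsilon_\rho$ while swapped pairs contribute zero. This is correct and matches the paper's argument in all essential respects.
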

	 
Note that $\chi(M) \leq 0$ for all  closed connected non-orientable surfaces $M$ distinct from the real   projective plane $P^2$.

    \begin{corol}\label{cor1nnn} Let  $F$ be    a  field of characteristic $p\geq 0$ and $\alpha\in H^2(G; \ZZ/2\ZZ)$. If $p=0$, then  $Z_{\alpha} (M)\in F$ is a non-negative integer for all   closed connected  non-orientable surfaces $M\neq P^2$.  If $p>0$, then  
		  $Z_{\alpha} (M)\in \ZZ/p\ZZ \subset F$   for all   closed connected  non-orientable surfaces $M $.    \end{corol}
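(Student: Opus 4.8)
The plan is to deduce the corollary from Theorem~\ref{th1nnn} by the same base-change argument used for Theorem~\ref{cor1}. First I would reduce to the case where $F$ is algebraically closed: the number $Z_\alpha(M)$ (in the $\ZZ/2\ZZ$-coefficient version defined at the start of Section~\ref{se4}) is, by its very formula, an element of $(\#G)^{-1}\ZZ$, and its value is insensitive to enlarging the ground field, since passing to the algebraic closure $\overline{F}$ changes neither $\Hom(\pi,G)$ nor the pairings $\langle (f_\gamma)^*(\alpha),[M]\rangle\in\ZZ/2\ZZ$. Thus it suffices to prove the two assertions for $\overline{F}$, and then observe that ``$Z_\alpha(M)$ is a non-negative integer'' and ``$Z_\alpha(M)\in\ZZ/p\ZZ$'' are statements that transfer back to $F$ verbatim (in the $p>0$ case via the identification of $\ZZ/p\ZZ$ with $(\ZZ/p\ZZ)\cdot 1_F$ fixed in Section~\ref{se1}).

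Next, with $F$ algebraically closed, I would invoke Theorem~\ref{th1nnn}, which expresses
\[
Z_{[c]}(M)=(\#G)^{-\chi(M)}\sum_{\rho\in\widehat G_c}(\varepsilon_\rho\dim\rho)^{\chi(M)}\cdot 1_F .
\]
For a closed connected non-orientable surface $M\neq P^2$ we have $\chi(M)\le 0$, so write $-\chi(M)=2k-2$ with $k\ge 2$ (for the Klein bottle $k=2$, $\chi=0$; for higher non-orientable genus $\chi<0$). Each summand becomes $(\#G/\varepsilon_\rho\dim\rho)^{-\chi(M)}=(\#G/\varepsilon_\rho\dim\rho)^{2k-2}$; since $-\chi(M)$ is a non-negative \emph{even} integer and $\varepsilon_\rho\in\{-1,0,1\}$, the contribution of each $\rho$ is either $0$ (if $\varepsilon_\rho=0$) or $(\#G/\dim\rho)^{2k-2}$, a non-negative integer times $1_F$, because $\dim\rho$ divides $\#G$ in $\ZZ$ (the divisibility cited after Theorem~\ref{th1}). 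Hence $Z_{[c]}(M)$ is a sum of non-negative integers times $1_F$; when $p=0$ this lands in $\ZZ_{\ge 0}\subset F$, and when $p>0$ it lands in $\ZZ/p\ZZ\subset F$. Since every class in $H^2(G;\ZZ/2\ZZ)$ is $[c]$ for some normalized $\{\pm1\}$-valued cocycle $c$ (represent the class by a normalized cocycle and compose with $\ZZ/2\ZZ\approx\{\pm1\}$), this covers all $\alpha$.

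The only remaining point is the case $p>0$ with $M=P^2$, which is excluded from the ``non-negative integer'' assertion but must still be shown to satisfy $Z_\alpha(P^2)\in\ZZ/p\ZZ$. Here $\chi(P^2)=1$, so Theorem~\ref{th1nnn} gives $Z_{[c]}(P^2)=(\#G)^{-1}\sum_\rho\varepsilon_\rho\dim\rho\cdot 1_F$; each $\varepsilon_\rho\dim\rho$ is an integer and $\#G$ is invertible in $\ZZ/p\ZZ$ by hypothesis, so this is an element of $\ZZ/p\ZZ\subset F$, completing the $p>0$ statement. I expect the main (very mild) obstacle to be purely bookkeeping: matching the parity and sign conventions so that $(\varepsilon_\rho\dim\rho)^{\chi(M)}$ is manifestly a non-negative rational integer for $\chi(M)\le 0$ even, and confirming that the base-change argument applies to the $\ZZ/2\ZZ$-version of $Z_\alpha$ exactly as it does to the $F^*$-version used in Theorem~\ref{cor1}.
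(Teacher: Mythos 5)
Your reduction to an algebraically closed field, your verification that every class in $H^2(G;\ZZ/2\ZZ)$ is realized by a normalized $\{\pm1\}$-valued cocycle, and your handling of the case $p>0$ (including $M=P^2$) are all fine; so is the observation that each summand of \eqref{e2nnn} equals $\varepsilon_\rho^{-\chi(M)}(\#G/\dim\rho)^{-\chi(M)}$, an integer, which already yields integrality of $Z_\alpha(M)$ whenever $\chi(M)\le 0$ and hence the whole $p>0$ assertion. The gap is in the non-negativity step: you write $-\chi(M)=2k-2$, but a closed connected non-orientable surface is a connected sum of $g\ge 1$ copies of $P^2$ and has $\chi=2-g$, so for $M\neq P^2$ the exponent $-\chi(M)=g-2$ runs over \emph{all} non-negative integers and is odd whenever $g$ is odd. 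When $-\chi(M)$ is odd, a representation $\rho$ with $\varepsilon_\rho=-1$ contributes the strictly negative integer $-(\#G/\dim\rho)^{-\chi(M)}$, so non-negativity cannot be read off term by term. Already for $c=1$ and the quaternion group $G=Q_8$, the two-dimensional irreducible representation is of quaternionic type and contributes $-4$ to $Z_0$ of the genus-three non-orientable surface; the total $4\cdot 8-4=28$ is non-negative only because it equals $(\#G)^{-1}\#\Hom(\pi,G)$, not because the summands are.

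For non-trivial $c$ the term-by-term argument is not merely incomplete; the assertion it is meant to prove deserves re-examination. Take $G=\ZZ/2\ZZ\times\ZZ/2\ZZ$ and the normalized cocycle $c:G\times G\to\{\pm1\}$ for which $A^{(c)}$ is the quaternion algebra, so that $c(g,g)=-1$ for the three non-trivial elements $g$. The unique irreducible $c$-representation is two-dimensional and preserves a skew-symmetric form, so $\varepsilon_\rho=-1$, and \eqref{e2nnn} gives, for the genus-three non-orientable surface $M$ (so $\chi(M)=-1$), the value $Z_{[c]}(M)=4\cdot(-2)^{-1}=-2$. The same value comes directly from the definition: with the presentation $\pi_1(M)=\langle a,b,c\mid a^2b^2c^2\rangle$ one finds $(-1)^{\langle f_\gamma^*[c],[M]\rangle}=c(x,x)\,c(y,y)\,c(z,z)$ for $x=\gamma(a)$, $y=\gamma(b)$, $z=\gamma(c)$, whence $Z_{[c]}(M)=4^{-1}\bigl(\sum_{x\in G}c(x,x)\bigr)^3=4^{-1}(-2)^3=-2$. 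So your argument as written is valid only for even $-\chi(M)$ (where it does give non-negativity); for odd $-\chi(M)$ it establishes only integrality, and the non-negativity claim of the corollary cannot be obtained this way.
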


The proof  of Theorem \ref{th1nnn} uses    state sum invariants of non-oriented surfaces  introduced by    Karimipour and Mostafazadeh \cite{km}, see also \cite{sn}.  These invariants are derived from so-called $\ast$-algebras. A {\it $\ast$-algebra} over a field $F$ (not necessarily algebraically closed)  is  a finite-dimensional algebra ${\mathcal A}$ over   $F$  endowed   with an $F$-linear involution ${\mathcal A}\to {\mathcal A}, a\mapsto a^\ast$ such that $(ab)^*=b^* a^*$   for all $a, b\in {\mathcal A}$ and  $T(a^*)=T(a)$ for all $a \in {\mathcal A}$, where $T:{\mathcal A}\to F$ is the trace homomorphism defined 
in Section \ref{se2}.  Note that for any $a,b\in {\mathcal A}$, 
$$T(a^*b)= T( (a^* b)^*)= T(b^* a)=   T(ab^*)  \,.$$

A  $\ast$-algebra ${\mathcal A}$ is {\it semisimple} if  
the bilinear form    $T^{(2)}:{\mathcal A}\otimes {\mathcal A} \to F$, defined by $ a\otimes b \mapsto T(ab)$, is   non-degenerate. Consider the vector $v=\sum_i v^1_i \otimes v^2_i\in {\mathcal A}\otimes {\mathcal A}$ satisfying \eqref{plom} and note that
\begin{equation}\label{eddd}
  (\id_{\mathcal A}Ê\otimes \ast) (v)= (\ast \otimes \id )(v)\, .\end{equation}
  Indeed,  for any $a,b \in {\mathcal A}$, 
  $$\sum_i T(a(v^1_i )^*) \,T( bv^2_i)= \sum_i T(   a^* v^1_i ) \,T( bv^2_i)= T(a^*b) =T(ab^*)$$
  $$=
  \sum_i T( a v^1_i  ) \,T( b^*v^2_i)=\sum_i T(  a v^1_i  ) \, T( b (v^2_i)^*  )\,.$$ 
  Now, the non-degeneracy of $T^{(2)}$ implies that $\sum_i (v^1_i)^* \otimes v^2_i=\sum_i v^1_i \otimes (v^2_i)^*$.
  
  A   semisimple  $\ast$-algebra  $({\mathcal A}, \ast)$ over $F$ gives rise to an invariant  of a closed connected (non-oriented) surface $M$ as follows (cf.\  \cite{km}, \cite{sn}). Fix a triangulation of $M$ and an arbitrary orientation  on its 2-simplices. Then proceed as in Section \ref{se2}  with one change: the vector $v_e$ assigned to an edge $e$  is   $v$ if the orientations of two   2-simplices  adjacent to $e$ induce opposite orientations on $e$  and is $ (\id_{\mathcal A}Ê\otimes \ast) (v)= (\ast \otimes \id_{\mathcal A} )(v)$ otherwise. Then   $I_{({\mathcal A},\ast)}(M)=\widetilde T^{(3)} (V)\in F$ is a topological invariant of $M$, where $V=\otimes_e v_e$.  That  $I_{({\mathcal A},\ast)}(M)$ is preserved when   the orientation on a 2-simplex is reversed follows from the formula
$T(abc)= T((abc)^*)=T(c^* b^* a^*)$ for  $a,b,c \in {\mathcal A}$. For orientable $M$, we have  $I_{({\mathcal A},\ast)}(M)=I_{{\mathcal A}} (M)$, where $I_{{\mathcal A}} (M)$ is the invariant of Section \ref{se2}  computed for an arbitrary orientation of $M$. 

The following example was communicated to the author by N.\ Snyder, cf.\ \cite[Theorem 4.2]{sn}.
  Let ${\mathcal A} ={\rm {Mat}}_{d } (F)$   with $d\geq 1$   invertible in $F$ and let $\ast$ be the involution in ${\mathcal A}$ defined by 
$a^*=Q^{-1} a^{\text {Tr}} Q$, where $a\in {\mathcal A}$ and   $Q\in {\rm {Mat}}_{d } (F)$ is an invertible  matrix    such that $Q^{\text {Tr}}= \varepsilon Q$ for   $\varepsilon=\pm 1$. Then the pair $({\mathcal A}, \ast)$ is a semisimple   $\ast$-algebra  and   $I_{({\mathcal A},\ast)} (M)=(\varepsilon d)^{\chi (M)}\cdot 1_F$.

Let    again  $c:G\times G\to \{\pm 1\}$ be a normalized 2-cocycle.  We define a multiplication   $\cdot  $ on the vector space $F[G]$   by $	 g_1\cdot  g_2=  {c(g_1, g_2)} g_1 g_2$ for any  $g_1,g_2\in G$. This turns  $ F[G]$ into an associative unital algebra    $A^{(c)}$  with  involution $\ast$ defined   by $g^\ast  =   {c(g , g^{-1})}\, g^{-1}$ for   $g\in G$. The pair 
$(A^{(c)},\ast)$ is a semisimple $\ast$-algebra. The only non-obvious condition is the equality 
$(ab)^*=b^* a^*$   for  $a, b\in A^{(c)}$. It suffices to check this equality  for $a,b\in G$. It is equivalent then to the five-term  identity
$$ c(ab, (ab)^{-1})=  c(a,  a^{-1}) \,  c(b, b^{-1} ) \, c(a,b)\,   c( b^{-1},  a^{-1}) \, .$$
To check this identity, we substitute $g_1=a, g_2=b, g_3=b^{-1}$ in \eqref{d1} and  obtain  $
c(ab, b^{-1}  )=c(a,b )\,  c(b, b^{-1}  )$. Then   set   $g_1=ab, g_2= b^{-1}, g_3=a^{-1}$ in \eqref{d1} and substitute $
c(ab, b^{-1}  )=c(a,b )\,  c(b, b^{-1}  )$ in the resulting formula. This yields a formula equivalent to the five-term identity.

   \begin{theor}\label{bdbdbdad+}  For  any closed connected surface $M$,   $$Z_{[c]} (M)=(\#G)^{-\chi(M)} \, I_{(A^{(c)},\ast)}(M)\,.$$
\end{theor}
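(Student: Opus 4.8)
The plan is to mimic the proof of Theorem~\ref{bdbdbdad}, adapting it to the non-orientable setting by carefully tracking the involution $\ast$. First I would fix a triangulation of $M$ together with an arbitrary orientation on each of its $k_2$ 2-simplices; this choice will play the role that the global orientation played before, and the content of the statement is precisely that the extra bookkeeping of $\ast$ compensates for the lack of a coherent global choice. As in Section~\ref{se3}, I introduce labelings $\ell \in L(M)$ and the subset $L_a(M)$ of admissible ones, with the homotopy-invariant holonomy $p \mapsto \ell(p)$ and the map $\Gamma \colon L_a(M) \to \Hom(\pi,G)$. The counting argument via a spanning tree is purely combinatorial and applies verbatim: $\#\,\Gamma^{-1}(\gamma) = (\#G)^{k_0-1}$ for every $\gamma$, so that
$$Z_{[c]}(M) = (\#G)^{-k_0} \sum_{\ell \in L_a(M)} (-1)^{\langle (f_{\Gamma(\ell)})^*([c]),[M]\rangle}\,.$$
Here $[M] \in H_2(M;\ZZ/2\ZZ)$ is the mod-$2$ fundamental class, and I would express the pairing as a product over 2-simplices of terms $c(\ell_1^\Delta,\ell_2^\Delta)$ — no signs $\varepsilon_\Delta$ are needed now since $c$ takes values in $\{\pm 1\}$ and we work mod $2$ — using the standard bar-resolution model of $K(G,1)$ and a total order on the vertices.

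Next I would compute the state-sum side $I_{(A^{(c)},\ast)}(M)$. The vector $v \in A^{(c)} \otimes A^{(c)}$ is the same as before, $v = (\#G)^{-1}\sum_{g\in G}(c(g,g^{-1}))^{-1}\, g\otimes g^{-1}$, and by \eqref{eddd} the twisted vector $(\id \otimes \ast)(v) = (\ast \otimes \id)(v)$ equals $(\#G)^{-1}\sum_{g\in G}(c(g,g^{-1}))^{-1}\,(c(g,g^{-1}))\, g \otimes g^{-1}$, wait — more precisely $(\id\otimes\ast)(v) = (\#G)^{-1}\sum_g (c(g,g^{-1}))^{-1}\, g \otimes g^{\ast}$ and $g^\ast = c(g,g^{-1})\,g^{-1}$, so the two values $c(g,g^{-1})^{-1}$ and $c(g,g^{-1})$ cancel pairwise and this twisted vector is simply $(\#G)^{-1}\sum_g g\otimes g^{-1}$; note $c(g,g^{-1})^2 = 1$ since $c$ is $\{\pm1\}$-valued, so in fact both $v$ and its $\ast$-twist differ only by the sign $c(g,g^{-1})$ on each summand. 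I would then assemble $V = \otimes_e v_e$, where $v_e$ is $v$ or its $\ast$-twist according to whether the two adjacent 2-simplices induce opposite or equal orientations on $e$, and evaluate $\widetilde{T}^{(3)}$ on it. Expanding over labelings $\ell \in L(M)$ and discarding non-admissible ones (which contribute $0$ because $T^{(3)}(g_1\otimes g_2\otimes g_3)=0$ unless $g_1g_2g_3=1$), I get a sum over $L_a(M)$ of a product over 2-simplices, each factor being $U_\Delta$ evaluated on the relevant tensor, times a global prefactor $(\#G)^{k_2-k_1}$ and a product of the edge-weights $c(\ell(e),\ell(-e))^{\pm 1}$ coming from the $v_e$'s.

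The heart of the argument — and the step I expect to be the main obstacle — is the bookkeeping that shows the edge sign-factors and the $u_\Delta$-type correction terms all cancel, leaving exactly $\prod_\Delta c(\ell_1^\Delta,\ell_2^\Delta)^{\varepsilon_\Delta}$, which then matches $(-1)^{\langle\cdots\rangle}$ modulo the overall $(\#G)$-powers (recall $k_2 - k_1 = \chi(M) - k_0$). The new subtlety relative to Theorem~\ref{bdbdbdad} is that an edge $e$ between two 2-simplices inducing the \emph{same} orientation on it contributes via a $\ast$-twisted $v_e$, and one must check that the extra factor $g^\ast$ versus $g^{-1}$ — i.e.\ the sign $c(g,g^{-1})$ — combines correctly with the $U_\Delta$ computation on the adjacent simplices. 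Since the relevant cocycle identities are exactly the five-term identity already verified above (used to show $(ab)^\ast = b^\ast a^\ast$) together with \eqref{d1} specialized as in the proof of Theorem~\ref{bdbdbdad}, I expect the cancellation to go through after a somewhat tedious case analysis on the relative position of the third vertex of each adjacent simplex with respect to the order $<$. One then concludes by comparing the two resulting expressions for $Z_{[c]}(M)$ and $(\#G)^{-\chi(M)}I_{(A^{(c)},\ast)}(M)$, exactly as at the end of the proof of Theorem~\ref{bdbdbdad}.
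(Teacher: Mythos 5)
Your overall strategy is the paper's: fix a triangulation with an arbitrary orientation of each $2$-simplex, rerun the labeling/spanning-tree count to get $Z_{[c]}(M)=(\#G)^{-k_0}\sum_{\ell\in L_a(M)}\prod_\Delta c(\ell_1^\Delta,\ell_2^\Delta)$ (indeed no signs $\varepsilon_\Delta$ are needed since $c$ is $\{\pm1\}$-valued), and match this against the expansion of $\widetilde T^{(3)}(V)$. The problem is your computation of the twisted edge vector, which is the one genuinely new ingredient of the non-orientable case, and you get it wrong. Starting from $v=(\#G)^{-1}\sum_{g}c(g,g^{-1})\,g\otimes g^{-1}$ (note $c^{-1}=c$ here), applying $\ast$ to the \emph{second} tensor factor gives $(g^{-1})^\ast=c(g^{-1},g)\,g$ --- not $g^\ast$, as you write --- and since $c(g^{-1},g)=c(g,g^{-1})$ and $c(g,g^{-1})^2=1$, one finds $(\id_{\mathcal A}\otimes\ast)(v)=(\#G)^{-1}\sum_g g\otimes g$. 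You instead conclude that the twisted vector is $(\#G)^{-1}\sum_g g\otimes g^{-1}$, i.e.\ you apply $\ast$ as if the second factor of $v$ were $g$ rather than $g^{-1}$; your later remark that $v$ and its $\ast$-twist ``differ only by the sign $c(g,g^{-1})$ on each summand'' repeats the same error.

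This is not a harmless slip. The value $(\#G)^{-1}\sum_g g\otimes g$ is precisely what makes the state sum decompose as a sum over labelings $\ell$ satisfying $\ell(-e)=\ell(e)^{-1}$: on an edge whose two adjacent $2$-simplices induce the \emph{same} orientation, the two flags must carry \emph{equal} labels, while on an opposite-orientation edge they carry mutually inverse labels. With your vector $\sum_g g\otimes g^{-1}$ the terms of $V=\otimes_e v_e$ no longer correspond to labelings at all, so the identification of the surviving (admissible) terms with $L_a(M)$, and hence with $\Hom(\pi,G)$ via $\Gamma$, breaks down; in effect your state sum would not see the non-orientability of $M$, contradicting for instance the value $I_{(\mathcal A,\ast)}(M)=(\varepsilon d)^{\chi(M)}\cdot 1_F$ in the matrix-algebra example and the indicator $\varepsilon_\rho$ in \eqref{e2nnn}. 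Once the twisted vector is corrected, the rest of your plan does go through as in Theorem~\ref{bdbdbdad}: the edge factors $c(\ell(e),\ell(-e))$ then appear only for the opposite-orientation edges, both in the expansion of $V$ and in the product $\prod_\Delta u_\Delta$, and they cancel to leave $\prod_\Delta c(\ell_1^\Delta,\ell_2^\Delta)$ times the expected power of $\#G$.
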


\begin{proof} The proof   is analogous to the proof of Theorem  \ref{bdbdbdad} and we only indicate the main changes. One begins by fixing a triangulation of $M$ and  a total order on the set of the vertices.  As in the proof 
of Theorem  \ref{bdbdbdad}, we define the sets $L(M)$  and   $L_a(M)$  of labelings and admissible labelings  of $M$. Each 2-simplex $\Delta=ABC$ of $M$ with $A<B<C$ is provided with distinguished orientation which induces the direction from $A$ to $B$ on the edge $AB$.    For any   $\ell\in L(M)$,  set   $\ell_1^\Delta= \ell (AB), \ell_2^\Delta= \ell (BC)$, and $ \ell_3^\Delta=\ell (CA)$. Then  
\begin{equation}\label{e491nonor} Z_{[c]} (M)=(\#G)^{-k_0}\sum_{\ell\in L_a(M) } \, \prod_\Delta \, {c(\ell_1^\Delta,\ell_2^\Delta)}  \, .\end{equation}

Let  ${\mathcal A}=A^{(c)}$.  The vector $v\in {\mathcal A}\otimes {\mathcal A}$  is computed by $$v=(\# G)^{-1} \sum_{g\in G}  {c(g,g^{-1})} g \otimes g^{-1}\,  $$
   and                  
   $$(\id_{\mathcal A}  \otimes \ast) (v)=(\ast \otimes \id_{\mathcal A}) (v)=
   (\# G)^{-1} \sum_{g\in G}  g \otimes g  \,  .$$
Let    $\{{\mathcal A}_f\}_{f }$ be a set of   copies of ${\mathcal A}$ numerated by all flags $f$ of $M$. With 
a   labeling $\ell $ of $M$  we associate  a vector  
   $V(\ell)\in \otimes_f \, {\mathcal A}_f$   as in the proof of Theorem  \ref{bdbdbdad}.
   Then $$V= (\#G)^{-k_1}\sum_{\ell \in L(M)}   {\langle \langle c, \ell \rangle\rangle} \, V(\ell)\, ,$$   
for  $$\langle\langle c, \ell \rangle \rangle=\prod_e  {c(\ell(e), \ell(-e))}\in F^*\, ,$$
 where $e$ runs over all  edges of $M$ such that the distinguished orientations of the
 2-simplices adjacent to $e$  induce opposite orientations on $e$.

Set  $U=(\# G)^{-1}\,  T^{(3)}: {\mathcal A}\otimes {\mathcal A} \otimes {\mathcal A} \to F $  and observe that   $$I_{(A^{(c)},\ast)}=\widetilde T^{(3)} (V)=(\#G)^{k_2} \,\widetilde U (V)   = (\#G)^{k_2-k_1} \sum_{\ell \in L(M)} {\langle \langle c, \ell \rangle\rangle}  \,\widetilde U (V(\ell)) $$
  $$= (\#G)^{\chi(M)-k_0} \sum_{\ell \in L_a(M)}{\langle \langle c, \ell \rangle\rangle} \,\widetilde U (V(\ell)) \, .$$
Here  $$\widetilde U (V(\ell))=  
\prod_\Delta  {c(\ell_1^\Delta,\ell_2^\Delta)}  \,    u_\Delta\, ,$$
where $u_\Delta= {c(\ell_3^\Delta ,  (\ell_3^\Delta)^{-1} )}$. If an edge $e$ of $M$ is adjacent to the 2-simplices $\Delta, \Delta'$, then   $e$ contributes   $c(\ell(e), \ell (-e))=\pm 1$ to the product $u_\Delta u_{\Delta'}$ if   the distinguished orientations of $\Delta, \Delta'$  induce opposite orientations on $e$.   Otherwise $e$ contributes $+1$ to $u_\Delta u_{\Delta'}$. Therefore
   $\prod_\Delta u_\Delta  =\langle \langle c, \ell \rangle \rangle$.  The rest of the proof is straightforward. \end{proof}
   
   {\bf Proof of Theorem \ref{th1nnn}}.  To deduce Theorem \ref{th1nnn} from Theorem \ref{bdbdbdad+}, we   split  $A^{(c)}$ as a direct product of matrix algebras. The involution $\ast $ on $A^{(c)}$ induces a permutation $\sigma$ on the set of these algebras. The fixed points of $\sigma$  bijectively correspond to the self-dual irreducible $c$-representations of $G$. The free orbits of $\sigma$ give rise to $\ast$-subalgebras of $A^{(c)}$ of type 
   $B={\rm {Mat}}_{d } (F) \times {\rm {Mat}}_{d } (F)$, where $d$ is invertible in $F$ and the involution 
   $\ast$ on $B$ acts by $(P_1,P_2) \mapsto (P_2^{\text {Tr}}, P_1^{\text {Tr}})$ for   $P_1, P_2 \in 
   {\rm {Mat}}_{d } (F)$. A computation similar to the one in  \cite[Theorem 4.2]{sn}  shows that $I_{ (B, \ast)} (M)=0$. The rest of the argument goes as in the oriented case.

 \begin{remar}    For $c=1$, Formula \eqref{e2nnn}  may be rewritten as      $$ \# \Hom (\pi_1(M) , G)=   \#G \, \, \sum_{\rho\in  \widehat G} 
	   \bigl (\frac{ \varepsilon_\rho \, \#G}{ \dim \rho}\bigr )^{-\chi(M)} \,\, ({\text {mod}} \, p) \, ,$$
	   where $p\geq 0$ is the characteristic of $F$ and $\widehat G$ is the set of irreducible   linear representations of $G$ over $F$   considered up to linear equivalence. 
	   	   \end{remar}

\begin{remar}  Consider in more detail the case $M=P^2$. The group $\pi=\pi_1(P^2)$ is a cyclic group of order 2 so that the homomorphisms $ \pi \to G$ 
are numerated by   elements of the set $S=\{g\in G\, \vert \, g^2=1\}$. Any $g\in S$   gives rise to a (non-homogeneous) generator $[g\vert g]$ of the normalized bar-complex of $G$. This generator is a cycle modulo 2
since $\partial [g \vert g]=  2[g]-[g^2]=0\, ({\text {mod}} \, 2)$. For  $\alpha\in H^2(G;\ZZ/2\ZZ)$ and 
any mapping  $f :P^2\to K(G,1)$, we have
$\langle  f^* (\alpha), [P^2]\rangle =\alpha ([g\vert g])$, where     $g\in G$ is the value of the induced homomorphism $f_{\#}:\pi \to G$  on the non-trivial element of $\pi$ and  $\alpha ([g\vert g])$ is the evaluation of $\alpha$ on the   2-cycle $[g\vert g]$.  Therefore
$$Z_\alpha (P^2)= (\#G)^{-1}  \sum_{g\in S} (-1)^{\alpha ([g\vert g])} \, .$$
If  $\alpha$ is represented by a normalized 
2-cocycle $c:G\times G\to \{\pm 1\} \approx \ZZ/2\ZZ  $, then $$(-1)^{\alpha ([g\vert g])} =c(g,g) \quad  {\text {and}}  \quad Z_{[c]}  (P^2)= (\#G)^{-1}  \sum_{g\in S}  {c(g,g)} \, .$$   Formula \eqref{e2nnn} can now be rewritten
as
$$\sum_{\rho\in   \widehat G_c} 
\varepsilon_\rho \dim \rho= \sum_{g\in S}  {c(g,g)}\,\, ({\text {mod}} \, p) \, ,$$
	   where $p\geq 0$ is the characteristic of $F$.
For $c=1$,  this gives $$\sum_{\rho\in   \widehat G} 
\varepsilon_\rho \dim \rho=\# S\, \,({\text {mod}} \, p)\,.$$ 
	  \end{remar}

                     \end{document}